\providecommand{\U}[1]{\protect\rule{.1in}{.1in}}
\providecommand{\U}[1]{\protect\rule{.1in}{.1in}}
\providecommand{\U}[1]{\protect\rule{.1in}{.1in}} \textwidth 16.3cm
\theoremstyle{plain}
\newtheorem{theorem}{Theorem}[section]
\newtheorem{proposition}[theorem]{Proposition}
\newtheorem{remark}[theorem]{Remark}
\newtheorem{lemma}[theorem]{Lemma}
\newtheorem{definition}[theorem]{Definition}
\numberwithin{equation}{section}
\begin{document}
\title[Almost summing polynomials]{Almost summing polynomials: a coherent and compatible approach the from the
infinite-dimensional holomorphy viewpoint}
\author{Daniel Pellegrino and Joilson Ribeiro}
\address[D. Pellegrino]{ Departamento de Matem\'{a}tica, Universidade Federal da
Para\'{\i}ba, 58.051-900 - Jo\~{a}o Pessoa, Brazil\\
[J. Ribeiro] Departamento de Matem\'{a}tica - CCEN - UFPE, Av. Prof. Luiz
Freire, s/n., Cidade Universit\'{a}ria, 50740-540, Recife, Brazil}
\email{pellegrino@pq.cnpq.br}
\thanks{D. Pellegrino is supported by CNPq Grant 301237/2009-3. }
\keywords{absolutely summing operators, almost summing operators, multi-ideals,
polynomial ideals}

\begin{abstract}
In this note we explore the notion of everywhere almost summing polynomials
and define a natural norm which makes this class a Banach multi-ideal which is
a holomorphy type (in the sense of L. Nachbin) and also coherent and
compatible (in the sense of D. Carando, V. Dimant and S. Muro) with the notion
of almost summing linear operators. Similar results are not valid for the
original concept of almost summing polynomials, due to G. Botelho.

\end{abstract}
\maketitle

\section{Introduction}

An operator ideal $\mathcal{I}$ is a subclass of the class $\mathcal{L}$ of
all continuous linear operators between Banach spaces such that for all Banach
spaces $E$ and $F$ its components%
\[
\mathcal{I}(E;F):=\mathcal{L}(E;F)\cap\mathcal{I}%
\]
satisfy:

(a) $\mathcal{I}(E;F)$ is a linear subspace of $\mathcal{L}(E;F)$ which
contains the finite rank operators.

(b) If $u\in\mathcal{I}(E;F)$, $v\in\mathcal{L}(G;E)$ and $w\in\mathcal{L}%
(F;H)$, then $w\circ u\circ v\in\mathcal{I}(G;H)$.

The operator ideal is a normed operator ideal if there is a function
$\Vert\cdot\Vert_{\mathcal{I}}\colon\mathcal{I}\longrightarrow\lbrack
0,\infty)$ satisfying

\bigskip

(1) $\Vert\cdot\Vert_{\mathcal{I}}$ restricted to $\mathcal{I}(E;F)$ is a
norm, for all Banach spaces $E$, $F$.

(2) $\Vert id\colon\mathbb{K}\longrightarrow\mathbb{K}:id(\lambda
)=\lambda\Vert_{\mathcal{I}}=1$,

(3) If $u\in\mathcal{I}(E;F)$, $v\in\mathcal{L}(G;E)$ and $w\in\mathcal{L}%
(F;H)$, then $\Vert w\circ u\circ v\Vert_{\mathcal{I}}\leq\Vert w\Vert\Vert
u\Vert_{\mathcal{I}}\Vert v\Vert$.

\bigskip

When $\mathcal{I}(E;F)$ with the above norm is always complete, $\mathcal{I}$
is called a Banach operator ideal. For details we refer to \cite{djp, mono}.

\bigskip

The notion of operator ideals, as well as its generalization to the
multilinear setting is due to A. Pietsch \cite{mono, PPPP, pp22}\ (we soon
will present the precise definitions). A natural question is: given an
operator ideal $\mathcal{I}$ (for example the class of compact operators) how
to define a multi-ideal and a polynomial ideal that keeps the spirit of
$\mathcal{I}$ ?

Abstract methods of defining when multilinear (and polynomial) extensions of
operator ideals are, in some sense, compatible with the structure of the
linear ideal were recently discussed by several authors. Sometimes a given
operator ideal has several possible extensions to multilinear and polynomial
ideals (we mention the case of absolutely summing operators \cite{PS,
davidarchiv} and almost summing operators \cite{BBJ-Archiv, JMAA}).

Some methods of evaluating multilinear/polynomial extensions of a given
operator ideal have been introduced recently. The idea is that, given positive
integers $k_{1}$ and $k_{2}$, the respective levels of $k_{1}$-linearity and
$k_{2}$-linearity of a given multi-ideal (or polynomial ideal) must have some
strong connection and also a connection with the original linear ideal
$(k=1)$. In this note we will be interested in the notions of holomorphy types
(in the sense of L. Nachbin \cite{Nachbin}) and also coherence and
compatibility (in the sense of D. Carando, V. Dimant and S. Muro \cite{CDM09,
ccc, ccc2}). All these concepts are related to the context of
infinite-dimensional holomorphy and for this reason we deal with the case
$\mathbb{K}=\mathbb{C}$, but the results can be naturally extended to the case
of real scalars. For the theory of polynomials in Banach spaces and
infinite-dimensional holomorphy we refer to \cite{din, Mu}.

The main goal of this note is to show that the space of everywhere almost
summing multilinear operators can be normed in such a way that it is a Banach
multi-ideal and the associated notion of everywhere almost summing polynomials
will be a (global) holomorphy type and also coherent and compatible (in the
sense of \cite{CDM09}) with the notion of almost summing linear operator. We
also show that the previous approach to almost summing polynomials fails these properties.

\section{Almost summing operators and polynomials}

The idea of considering the notion of nonlinear almost summing operators
appears in \cite{Botelho-Nach, BBJ-Archiv} and was also explored in
\cite{Daniel-Archiv, JMAA, PS}. For the linear theory of almost summing
operators we refer to classical monograph \cite{Diestel}. In this work we aim
to explore the concept of almost summing mapping at a given point, obtaining a
norm with good properties in the space of almost summing mappings at every
point (called everywhere almost summing mappings). Initially we, in part,
adapted similar results (for absolutely summing maps) obtained in \cite{Scand,
BBJP, Matos-N}; however, the case of almost summing mappings is more delicate,
and even in the linear case requires special attention (see, \cite[Pag.
27]{Botelho-Nach} ).

Throughout this paper $E,E_{1},\ldots,E_{n},F,G,G_{1},\ldots,G_{n},H$ will
stand for Banach spaces; $B_{E}$ denotes the closed unit ball of $E$ and
$E^{\ast}$ denotes the topological dual of $E$.

For $1\leq p<\infty$, the Banach space of all sequences $\left(  x_{j}\right)
_{j=1}^{\infty}$ in $E$ such that
\[
\left\Vert \left(  x_{j}\right)  _{j=1}^{\infty}\right\Vert _{p}=\left(
\sum_{j=1}^{\infty}\left\Vert x_{j}\right\Vert ^{p}\right)  ^{\frac{1}{p}%
}<\infty
\]
is represented by $\ell_{p}\left(  E\right)  $. We denote by $\ell_{p}%
^{w}\left(  E\right)  $ the linear space formed by the sequences $\left(
x_{j}\right)  _{j=1}^{\infty}$ in $E$ such that $\left(  \varphi\left(
x_{j}\right)  \right)  _{j=1}^{\infty}\in\ell_{p}\left(  \mathbb{K}\right)  $,
for every $\varphi\in E^{\ast}$. The space $\ell_{p}^{w}\left(  E\right)
$\ is a Banach space when endowed with the norm
\[
\left\Vert \left(  x_{j}\right)  _{j=1}^{\infty}\right\Vert _{w,p}%
:=\sup_{\varphi\in B_{E^{\ast}}}\left(  \sum_{j=1}^{\infty}\left\vert
\varphi\left(  x_{j}\right)  \right\vert ^{p}\right)  ^{\frac{1}{p}}.
\]
The closed linear subspace of $\ell_{p}^{w}\left(  E\right)  $ of all
sequences $\left(  x_{j}\right)  _{j=1}^{\infty}$ in $\ell_{p}^{w}\left(
E\right)  $, such that
\[
\lim_{m\rightarrow\infty}\left\Vert \left(  x_{j}\right)  _{j=m}^{\infty
}\right\Vert _{w,p}=0
\]
is denoted by $\ell_{p}^{u}\left(  E\right)  $.

For $0<p<\infty$, $Rad_{p}\left(  E\right)  $ represents the vector space
formed by the sequences $\left(  x_{j}\right)  _{j=1}^{\infty}$ so that
\[
\sum_{j=1}^{n}r_{j}\left(  \cdot\right)  x_{j}%
\]
converges in $L_{p}\left(  E\right)  =L_{p}\left(  \left[  0,1\right]
;E\right)  $, where $r_{j}$ denotes the $j^{\text{th }}$-Rademacher function
(or, equivalently, if $\sum_{j=1}^{n}r_{j}\left(  t\right)  x_{j}$ is
convergent in $E$\ for (Lebesgue) almost all $t\in\left[  0,1\right]  $). It
is well-known (Kahane's Inequality (cf. \cite[11.1]{Diestel})) that
$Rad_{p}\left(  E\right)  =Rad_{q}\left(  E\right)  $ for all $0<p,q<\infty$
and for this reason we just write $Rad\left(  E\right)  $ instead of
$Rad_{p}\left(  E\right)  $. The space $Rad\left(  E\right)  $ is a Banach
space when endowed with a norm given by
\[
\left\Vert \left(  x_{j}\right)  _{j=1}^{\infty}\right\Vert _{Rad\left(
E\right)  }:=\left(  \int_{0}^{1}\left\Vert \sum_{j=1}^{\infty}r_{j}\left(
t\right)  x_{j}\right\Vert ^{2}dt\right)  ^{\frac{1}{2}}.
\]
If $1<p\leq2$, a continuous linear operator $u:E\rightarrow F$ is almost
$p$-summing when $\left(  u(x_{j})\right)  _{j=1}^{\infty}\in Rad(F)$ whenever
$(x_{j})_{j=1}^{\infty}\in\ell_{p}^{u}(E)$. The theory of almost summing
operators has a strong connection with the well-known theory of absolutely
summing operators. For details on almost summing operators we refer to the
excellent monograph \cite{Diestel} and for recent results we mention
\cite{Botelho-Nach, Daniel-Archiv, ta} and for classical results on absolutely
summing linear operators we refer to \cite{di, Diestel} and references
therein; for recent results on linear (and nonlinear) absolutely summing
operators we refer to \cite{arregui, badea, PellZ, cilia, Def2, junek, ku,
pesa, pesa2} and to \cite{resumeD} for a modern approach to Grothendieck's
Resum\'{e} and the roots of the theory of absolutely summing operators. The
class of almost $p$-summing operators, denoted by $\Pi_{al,p}$ (with a natural
norm that will be clear in the forthcoming Remark \ref{yb}), is Banach
operator ideal.

The concept of multi-ideals is also due to A. Pietsch \cite{PPPP}. An ideal of
multilinear mappings (or multi-ideal) $\mathcal{M}$ is a subclass of the class
$\mathcal{L}$ of all continuous multilinear operators between Banach spaces
such that for any positive integer $n$, Banach spaces $E_{1},\ldots,E_{n}$ and
$F$, the components
\[
\mathcal{M}(E_{1},\ldots,E_{n};F):=\mathcal{L}(E_{1},\ldots,E_{n}%
;F)\cap\mathcal{M}%
\]
satisfy:

\bigskip

(a) $\mathcal{M}(E_{1},\ldots,E_{n};F)$ is a linear subspace of $\mathcal{L}%
(E_{1},\ldots,E_{n};F)$ which contains the $n$-linear mappings of finite type.

(b) If $T\in\mathcal{M}(E_{1},\ldots,E_{n};F)$, $u_{j}\in\mathcal{L}%
(G_{j};E_{j})$ for $j=1,\ldots,n$ and $v\in\mathcal{L}(F;H)$, then $v\circ
T\circ(u_{1},\ldots,u_{n})$ belongs to $\mathcal{M}(G_{1},\ldots,G_{n};H)$.

Moreover, $\mathcal{M}$ is a normed multi-ideal if there is a function
$\Vert\cdot\Vert_{\mathcal{M}}\colon\mathcal{M}\longrightarrow\lbrack
0,\infty)$ satisfying

\bigskip

(1) For each $n$, $\Vert\cdot\Vert_{\mathcal{M}}$ restricted to $\mathcal{M}%
(E_{1},\ldots,E_{n};F)$ is a norm, for all Banach spaces $E_{1},\ldots,E_{n}$
and $F.$

(2) $\Vert A_{n}\colon\mathbb{K}^{n}\longrightarrow\mathbb{K}:A_{n}%
(\lambda_{1},\ldots,\lambda_{n})=\lambda_{1}\cdots\lambda_{n}\Vert
_{\mathcal{M}}=1$ for all $n$,

(3) If $T\in\mathcal{M}(E_{1},\ldots,E_{n};F)$, $u_{j}\in\mathcal{L}%
(G_{j};E_{j})$ for $j=1,\ldots,n$ and $v\in\mathcal{L}(F;H)$, then
\[
\Vert v\circ T\circ(u_{1},\ldots,u_{n})\Vert_{\mathcal{M}}\leq\Vert
v\Vert\Vert T\Vert_{\mathcal{M}}\Vert u_{1}\Vert\cdots\Vert u_{n}\Vert.
\]

Analogously, a polynomial ideal is a class $\mathcal{Q}$ of continuous
homogeneous polynomials between Banach spaces so that for all $n\in\mathbb{N}$
and all Banach spaces $E$ and $F$, the components
\[
\mathcal{Q}\left(  ^{n}E;F\right)  :=\mathcal{P}\left(  ^{n}E;F\right)
\cap\mathcal{Q}%
\]
satisfy

(a) $\mathcal{Q}\left(  ^{n}E;F\right)  $ is a subspace of $\mathcal{P}\left(
^{n}E;F\right)  $ which contains the finite-type polynomials.

(b) If $u\in\mathcal{L}\left(  G;E\right)  $, $P\in\mathcal{Q}\left(
^{n}E;F\right)  $ and $w\in\mathcal{L}\left(  ^{n}E;F\right)  $, then
\[
w\circ P\circ u\in\mathcal{Q}\left(  ^{n}G;H\right)  .
\]

If there exists a map $\left\Vert \cdot\right\Vert _{\mathcal{Q}}%
:\mathcal{Q}\rightarrow\lbrack0,\infty\lbrack$ satisfying

(1) For each $n,$ $\left\Vert \cdot\right\Vert _{\mathcal{Q}}$ restricted to
$\mathcal{Q}(^{n}E;F)$ is a norm for all Banach spaces $E$ and $F$;

(2) $\left\Vert P_{n}:\mathbb{K}\rightarrow\mathbb{K};\text{ }P_{n}\left(
\lambda\right)  =\lambda^{n}\right\Vert _{\mathcal{Q}}=1$ for all $n$;

(3) If $u\in\mathcal{L}(G;E)$, $P\in\mathcal{Q}(^{n}E;F)$ and $w\in
\mathcal{L}(F;H),$ then
\[
\left\Vert w\circ P\circ u\right\Vert _{\mathcal{Q}}\leq\left\Vert
w\right\Vert \left\Vert P\right\Vert _{\mathcal{Q}}\left\Vert u\right\Vert
^{n},
\]
then $\mathcal{Q}$ is a normed polynomial ideal. If all components
$\mathcal{Q}\left(  ^{n}E;F\right)  $ are complete, $\left(  \mathcal{Q}%
,\left\Vert \cdot\right\Vert _{\mathcal{Q}}\right)  $ is called a Banach-ideal
of polynomials.

Let $1<p\leq2.$ Originally, as defined in \cite{Botelho-Nach}, a polynomial
$P\in\mathcal{P}\left(  ^{n}E;F\right)  $ is almost summing if $\left(
P\left(  x_{j}\right)  \right)  _{j=1}^{\infty}\in Rad\left(  F\right)  $
whenever $\left(  x_{j}\right)  _{j=1}^{\infty}\in\ell_{p}^{u}\left(
E\right)  .$ However, this concept (with any norm) fails to be a global
holomorphy type (for the definition of global holomorphy type we refer to
\cite{BBJP}). For example, let $P\in\mathcal{P}(^{2}c_{0};c_{0})$ be given by
\[
P\left(  x\right)  =\varphi\left(  x\right)  x,
\]
where $0\neq\varphi\in E^{\ast},$ $a\in c_{0}$ and $\varphi\left(  a\right)
=1$. From \cite[Corollary 3]{Daniel-Archiv} we have $P\in\mathcal{P}%
_{al,2}(^{2}c_{0};c_{0})$ but $dP\left(  a\right)  \notin\mathcal{L}%
_{al,2}(c_{0};c_{0})$ and hence $\mathcal{P}_{al,2}$ is not a global
holomorphy type. The same example also shows that this concept is not
compatible with the ideal of almost summing operators in the sense of
\cite{CDM09}.

In \cite{Daniel-Archiv} a new approach to almost summability of arbitrarily
nonlinear mappings was presented, where almost summability was considered on
certain points of the domain, as we will see in the next section.

\section{Everywhere almost summing multilinear mappings and polynomials}

Let $1<p_{1},...,p_{n}\leq2$. A map $T\in\mathcal{L}(E_{1},\ldots,E_{n};F)$ is
almost $\left(  p_{1},\ldots,p_{n}\right)  $-summing at $a=\left(
a_{1},\ldots,a_{n}\right)  \in E_{1}\times\cdots\times E_{n}$ if
\[
\left(  T\left(  a_{1}+x_{j}^{\left(  1\right)  },\ldots,a_{n}+x_{j}^{\left(
n\right)  }\right)  -T\left(  a_{1},\ldots,a_{n}\right)  \right)
_{j=1}^{\infty}\in Rad\left(  F\right)
\]
whenever $\left(  x_{j}^{\left(  i\right)  }\right)  _{j=1}^{\infty}\in
\ell_{p_{i}}^{u}\left(  E_{i}\right)  $, $i=1,\ldots,n$. The set of all
$T\in\mathcal{L}(E_{1},\ldots,E_{n};F)$ which are almost $\left(  p_{1}%
,\ldots,p_{n}\right)  $-summing at $a=\left(  a_{1},\ldots,a_{n}\right)  $ is
denoted by $\mathcal{L}_{al,\left(  p_{1},\ldots,p_{n}\right)  }^{(a)}%
(E_{1},\ldots,E_{n};F)$.

In particular, if $T$ is almost $\left(  p_{1},\ldots,p_{n}\right)  $-summing
at $a=0$, we say that $T$ is almost $\left(  p_{1},\ldots,p_{n}\right)
$-summing and we denote the respective set by $\mathcal{L}_{al,\left(
p_{1},\ldots,p_{n}\right)  }(E_{1},\ldots,E_{n};F)$.

If $E_{1}=\ldots=E_{n}=E$ and $p_{1}=...=p_{n}=p$ we write $\mathcal{L}%
_{al,p}^{(a)}(^{n}E;F)$ or $\mathcal{L}_{al,p}(^{n}E;F)$ if $a=0.$

The space composed by all continuous $n$-linear mappings which are $\left(
p_{1},\ldots,p_{n}\right)  $-summing at every point is denoted by
$\mathcal{L}_{al,\left(  p_{1},\ldots,p_{n}\right)  }^{ev}(E_{1},\ldots
,E_{n};F).$ A similar definition holds for polynomials and the respective
spaces are represented by $\mathcal{P}_{al,p}^{(a)}(^{n}E;F)$ and
$\mathcal{P}_{al,p}^{ev}(^{n}E;F)$.

Some of the arguments used in the forthcoming proofs related to
multilinear/polynomial notions of almost summing operators can be obtained,
\textit{mutatis mutandis}, from similar arguments used for absolutely summing
operators in \cite{Scand, Matos-N}. For this reason we will omit the more
simple adaptations and concentrate on the parts that need a different treatment.

\subsection{Dvoretzky-Rogers type theorems}

The following result was proved in \cite[Theorem 4]{Daniel-Archiv}

\begin{theorem}
\label{yya}For $1<p\leq2,$%
\begin{align*}
\mathcal{P}(^{n}E;E)  &  \neq\mathcal{P}_{al,p}^{ev}(^{n}E;E)\Leftrightarrow
\dim E=\infty\text{ and }\\
\mathcal{L}(^{n}E;E)  &  \neq\mathcal{L}_{al,p}^{ev}(^{n}E;E)\Leftrightarrow
\dim E=\infty.
\end{align*}

\end{theorem}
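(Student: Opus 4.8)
The plan is to reduce both equivalences to the linear Dvoretzky--Rogers theorem for almost $p$-summing operators: if $\dim F=\infty$ then the identity $id_{F}$ is not almost $p$-summing, i.e.\ there is a sequence $(x_{j})_{j=1}^{\infty}\in\ell_{p}^{u}(F)\setminus Rad(F)$. One may simply invoke this (it is the delicate linear fact alluded to in \cite[Pag. 27]{Botelho-Nach}); if a self-contained argument is wanted, one builds such a sequence by blocks. Using Dvoretzky's theorem, pick for each $m$ a subspace $F_{m}\subseteq F$ and vectors $u_{1}^{(m)},\dots,u_{2^{m}}^{(m)}$ that are $2$-equivalent to the unit-vector basis of $\ell_{2}^{2^{m}}$, and let the entries of the $m$-th block be $2^{-m/2}u_{i}^{(m)}$. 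Then the $m$-th ``block increment'' of the Rademacher series has $L_{2}(F)$-norm bounded below by a positive constant, so $\sum_{j}r_{j}x_{j}$ is not Cauchy and $(x_{j})\notin Rad(F)$; on the other hand a H\"{o}lder estimate shows the weak-$p$ norm of the tail from the $m$-th block on is bounded above by $C\sum_{k\ge m}2^{k(1-p)}$, which tends to $0$ precisely because $p>1$, so $(x_{j})\in\ell_{p}^{u}(F)$.

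For the implications ``$\dim E<\infty\Rightarrow$ equality'' the argument is routine and I would only sketch it. When $\dim E<\infty$ we have $\ell_{p}^{u}(E)=\ell_{p}(E)$, so every admissible sequence of increments $(x_{j})$ (or each $(x_{j}^{(i)})$ in the multilinear case) tends to $0$ in norm and lies in $\ell_{p}(E)$. A continuous $n$-homogeneous polynomial $P$ (resp. $n$-linear map $T$) is Lipschitz on bounded sets, so $(P(a+x_{j})-P(a))_{j}$ (resp. the corresponding multilinear difference) is dominated in norm by a constant times $\sum_{i}\Vert x_{j}^{(i)}\Vert$, hence belongs to $\ell_{p}(E)$; since $p\le2$ and the terms go to $0$ this sequence is in $\ell_{2}(E)$, and $\ell_{2}(E)\subseteq Rad(E)$ because finite-dimensional spaces have type $2$. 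Thus $\mathcal{P}(^{n}E;E)=\mathcal{P}_{al,p}^{ev}(^{n}E;E)$ and $\mathcal{L}(^{n}E;E)=\mathcal{L}_{al,p}^{ev}(^{n}E;E)$.

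For ``$\dim E=\infty\Rightarrow$ inequality'', fix $\varphi\in E^{\ast}\setminus\{0\}$ and $a\in E$ with $\varphi(a)=1$, and consider
\[
P(x)=\varphi(x)^{n-1}x\in\mathcal{P}(^{n}E;E),\qquad T(x_{1},\dots,x_{n})=\varphi(x_{1})\cdots\varphi(x_{n-1})\,x_{n}\in\mathcal{L}(^{n}E;E).
\]
Since $\ker\varphi$ is infinite-dimensional, the linear step provides $(x_{j})_{j}\in\ell_{p}^{u}(\ker\varphi)\setminus Rad(\ker\varphi)$, and because $\ker\varphi$ is a closed subspace this same sequence lies in $\ell_{p}^{u}(E)$ but not in $Rad(E)$. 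As $\varphi(x_{j})=0$, one computes
\[
P(a+x_{j})-P(a)=(1+\varphi(x_{j}))^{n-1}(a+x_{j})-a=x_{j},
\]
and similarly $T(a+x_{j},\dots,a+x_{j})-T(a,\dots,a)=x_{j}$; hence $P$ is not almost $p$-summing at $a$ and $T$ is not almost $p$-summing at $(a,\dots,a)$, so $\mathcal{P}(^{n}E;E)\neq\mathcal{P}_{al,p}^{ev}(^{n}E;E)$ and $\mathcal{L}(^{n}E;E)\neq\mathcal{L}_{al,p}^{ev}(^{n}E;E)$. (For $n=1$ both $P$ and $T$ reduce to $id_{E}$ and the conclusion is the linear step itself.)

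The genuine content is concentrated in the linear Dvoretzky--Rogers statement; every other step is a reduction to it. If one insists on a fully self-contained proof, the delicate point is the blocking construction above, specifically calibrating the block lengths $2^{m}$ against the coefficients $2^{-m/2}$ so that the sequence is simultaneously small in its $\ell_{p}^{u}$-tail and divergent in $Rad$ --- this is exactly where the restriction $1<p\le2$ enters.
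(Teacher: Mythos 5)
Your argument is correct; note, however, that the paper offers no internal proof of Theorem \ref{yya} to compare against --- the result is simply quoted from \cite[Theorem 4]{Daniel-Archiv}. Your route is essentially the standard one (and the one of that reference): everything is reduced to the linear Dvoretzky--Rogers-type fact that $id_{E}\notin\Pi_{al,p}(E;E)$ when $\dim E=\infty$; the finite-dimensional implication is the routine observation that $\ell_{p}^{u}(E)=\ell_{p}(E)\subseteq\ell_{2}(E)\subseteq Rad(E)$ together with local Lipschitz estimates; and the infinite-dimensional implication transports a sequence $(x_{j})\in\ell_{p}^{u}(\ker\varphi)\setminus Rad(\ker\varphi)$ through $P(x)=\varphi(x)^{n-1}x$ and $T(x_{1},\ldots,x_{n})=\varphi(x_{1})\cdots\varphi(x_{n-1})x_{n}$ at the base point $a$ (resp. $(a,\ldots,a)$) with $\varphi(a)=1$, the choice of increments in $\ker\varphi$ making the difference sequence exactly $(x_{j})$; your passage from the subspace to $E$ (same $\ell_{p}^{u}$ membership by Hahn--Banach, same failure of $Rad$ because $L_{2}([0,1];\ker\varphi)$ is closed in $L_{2}([0,1];E)$) is sound, and using a nonzero base point is indeed essential, since at the origin the statement fails (cf. the example $P(x)=\varphi(x)x$ on $c_{0}$ recalled in Section 2). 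Your Dvoretzky block construction for the linear step is also correct and is where the hypothesis $p>1$ genuinely enters; the only slip is cosmetic: the tail weak-$p$ estimate should read
\[
\Bigl\Vert (x_{j})_{j\geq j_{m}}\Bigr\Vert _{w,p}\leq\Bigl(C\sum_{k\geq m}2^{k(1-p)}\Bigr)^{1/p}\qquad\text{or, by the triangle inequality,}\qquad C\sum_{k\geq m}2^{k(1-p)/p},
\]
rather than $C\sum_{k\geq m}2^{k(1-p)}$; since $1-p<0$ all three expressions tend to $0$, so the conclusion is unaffected.
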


\bigskip A repetition of the arguments used in \cite[Theorems 3.2 and
3.7]{Scand} furnishes the following improvements of Theorem \ref{yya}:

\begin{theorem}
[Dvoretzky-Rogers type Theorem for multilinear operators]\label{kkkl} Let $E$
be a Banach space, $n\geq2$ and $1<p\leq2$. The following assertions are equivalent:

\textrm{(a)} $E$ is infinite-dimensional.

\textrm{(b)} $\mathcal{L}_{al,p}^{(a)}(^{n}E;E)\neq\mathcal{L}(^{n}E;E)$ for
every $a=(a_{1},\ldots,a_{n})\in E^{n}$ with either $a_{i}\neq0$ for every $i$
or $a_{i}=0$ for only one $i$.

\textrm{(c)} $\mathcal{L}_{al,p}^{(a)}(^{n}E;E)\neq\mathcal{L}(^{n}E;E)$ for
some $a=(a_{1},\ldots,a_{n})\in E^{n}$ with either $a_{i}\neq0$ for every $i$
or $a_{i}=0$ for only one $i$.
\end{theorem}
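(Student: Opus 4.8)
The plan is to prove the cyclic implications $(a)\Rightarrow(b)\Rightarrow(c)\Rightarrow(a)$, with the real work concentrated in $(a)\Rightarrow(b)$ and $(c)\Rightarrow(a)$; the implication $(b)\Rightarrow(c)$ is trivial since it only asks us to pass from ``for every such $a$'' to ``for some such $a$'' (the set of admissible $a$ is nonempty).

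For $(a)\Rightarrow(b)$, fix an admissible point $a=(a_{1},\ldots,a_{n})$. I would treat the two allowed configurations separately but with the same underlying device. If some single coordinate, say $a_{i_{0}}$, is zero, then freezing the other coordinates at $a_{i}$ (which are fixed vectors) reduces the question of almost $p$-summability of $T$ at $a$, as a function of the $i_{0}$-th variable, to a statement about a linear operator on $E$; invoking Theorem~\ref{yya} (or rather its linear part, which gives $\mathcal{L}_{al,p}^{ev}(^{1}E;E)\neq\mathcal{L}(E;E)$ when $\dim E=\infty$, i.e.\ the classical Dvoretzky--Rogers theorem for $\Pi_{al,p}$) produces a linear $v:E\to E$ that is not almost $p$-summing, and then $T(y_{1},\ldots,y_{n}):=\big(\prod_{i\neq i_{0}}\varphi_{i}(y_{i})\big)v(y_{i_{0}})$, for suitable functionals $\varphi_{i}$ with $\varphi_{i}(a_{i})=1$, fails almost $p$-summability at $a$ by choosing a witnessing sequence only in the $i_{0}$-th slot and $a_i+0$ in the others. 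If instead every $a_{i}\neq0$, I would pick $\varphi_{i}\in E^{\ast}$ with $\varphi_{i}(a_{i})=1$ and build $T(y_{1},\ldots,y_{n})=\varphi_{1}(y_{1})\cdots\varphi_{n-1}(y_{n-1})\,v(y_{n})$ where $v\notin\Pi_{al,p}(E;E)$; evaluating the difference quotient in the definition with a perturbation sequence placed in the last variable and $0$ in the others, the non-summing behavior of $v$ survives, so $T\notin\mathcal{L}_{al,p}^{(a)}(^{n}E;E)$. This mirrors the finite-type constructions of \cite[Theorems 3.2 and 3.7]{Scand}, adapted \emph{mutatis mutandis} to the Rademacher setting.

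For $(c)\Rightarrow(a)$ I argue the contrapositive: if $\dim E<\infty$, then $\ell_{p}^{u}(E)=\ell_{p}(E)$ and every bounded operator into the finite-dimensional space $E$ is almost $p$-summing (indeed $\Pi_{al,p}(E;E)=\mathcal{L}(E;E)$ since $Rad(E)$ coincides with $\ell_{2}(E)$ up to equivalence and absolutely summing = bounded in finite dimensions). One then checks directly that for any $T\in\mathcal{L}(^{n}E;E)$ and any $a\in E^{n}$, the telescoped difference $T(a_{1}+x_{j}^{(1)},\ldots,a_{n}+x_{j}^{(n)})-T(a_{1},\ldots,a_{n})$ expands, by multilinearity, into a finite sum of terms each of which is linear in at least one of the summable sequences $(x_{j}^{(i)})_{j}\in\ell_{p}(E)$ (with the remaining arguments being fixed vectors or bounded sequences), hence lies in $Rad(E)$. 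Thus $\mathcal{L}_{al,p}^{(a)}(^{n}E;E)=\mathcal{L}(^{n}E;E)$ for every $a$, contradicting (c).

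The main obstacle is the $(a)\Rightarrow(b)$ step in the case $a_{i}=0$ for exactly one coordinate: one must be careful that the definition of almost $p$-summing at $a$ involves perturbing \emph{all} coordinates simultaneously, so the counterexample operator must be engineered so that the only ``dangerous'' contributions come through the single zero coordinate, while perturbations in the nonzero coordinates produce terms that stay in $Rad(F)$ automatically. Controlling the cross terms — products where several $x_{j}^{(i)}$ appear together — requires the multilinear Hölder-type estimates for mixed $\ell_{p}^{u}$-sequences and the fact that $Rad(\cdot)$ absorbs scalar multiplication by bounded sequences; these are exactly the ``parts that need a different treatment'' alluded to before the subsection, and they are handled by the same bookkeeping as in \cite{Scand}, now with $\|\cdot\|_{Rad}$ in place of the $\ell_{q}$-norm.
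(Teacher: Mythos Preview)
Your proposal is correct and follows essentially the same route as the paper, which simply defers to \cite[Theorems~3.2 and~3.7]{Scand} for the argument. Note only that the worry in your final paragraph is unnecessary: to prove $T\notin\mathcal{L}_{al,p}^{(a)}(^{n}E;E)$ you need to exhibit just \emph{one} bad family of perturbing sequences, and by taking the zero sequence in every slot except the distinguished one you kill all cross terms outright, so no mixed H\"older-type control is required for $(a)\Rightarrow(b)$.
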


\begin{theorem}
[Dvoretzky-Rogers type Theorem for polynomials]Let $E$ be a Banach space,
$n\geq2$ and $1<p\leq2$. The following assertions are equivalent:

\textrm{(a)} $E$ is infinite-dimensional.

\textrm{(b)} $\mathcal{P}_{al,p}^{(a)}(^{n}E;E)\neq\mathcal{P}(^{n}E;E)$ for
all $a\in E$, $a\neq0$.

\textrm{(c)} $\mathcal{P}_{al,p}^{(a)}(^{n}E;E)\neq\mathcal{P}(^{n}E;E)$ for
some $a\in E$, $a\neq0$.
\end{theorem}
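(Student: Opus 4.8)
The plan is to follow the strategy already used for the multilinear Dvoretzky--Rogers theorem (Theorem~\ref{kkkl}) and for the polynomial result of \cite[Theorem~3.7]{Scand}, reducing the polynomial statement to the multilinear one via polarization, or rather via the associated symmetric $n$-linear mapping. The implications $(b)\Rightarrow(c)$ being trivial, the real content is the equivalence of $(a)$ with the existence/non-existence of a point $a\neq 0$ at which some $P\in\mathcal P(^nE;E)$ fails to be almost $p$-summing.

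First I would prove $(c)\Rightarrow(a)$ by contraposition: if $\dim E<\infty$, then $\ell_p^u(E)=\ell_p(E)$ and every bounded sequence lies in $Rad(E)$ (since in finite dimensions $Rad(E)$ just asks for $\ell_2$-summability of norms, which follows from $\ell_p$-summability for $p\le 2$), so by continuity of $P$ and the elementary estimate $\|P(a+x_j)-P(a)\|\le C\|x_j\|$ for $\|x_j\|$ bounded (valid because $P(a+\cdot)-P(a)$ is a polynomial vanishing at $0$, hence locally Lipschitz-type with a bound controlled by $\|x_j\|$ on bounded sets), one gets $(P(a+x_j)-P(a))_j\in \ell_p(E)\subseteq Rad(E)$ for every $a$. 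Hence $\mathcal P_{al,p}^{(a)}(^nE;E)=\mathcal P(^nE;E)$ for all $a$, contradicting $(c)$.

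For $(a)\Rightarrow(b)$, fix $a\in E$ with $a\neq 0$ and pick $\varphi\in E^{\ast}$ with $\varphi(a)=1$. The natural candidate is $P(x)=\varphi(x)^{n-1}x$, whose associated symmetric $n$-linear map $\check P$ has, at the point $\mathbf a=(a,\dots,a)$, a ``directional'' behavior that reproduces the identity operator in a suitable sense. Concretely, since $\dim E=\infty$ and $1<p\le2$, by the linear Dvoretzky--Rogers theorem there is $(x_j)_j\in\ell_p^u(E)$ with $(x_j)_j\notin Rad(E)$. One then expands $P(a+x_j)-P(a)=\varphi(a+x_j)^{n-1}(a+x_j)-a$ and isolates the term $\varphi(a)^{n-1}x_j=x_j$; all remaining terms are of the form $\varphi(x_j)^k\cdot(\text{vector})$ with the scalar part in $\ell_p\subseteq\ell_2$ and the vector part in $\ell_p^u(E)\subseteq Rad(E)$ or constant, so by Hölder and the fact that $Rad(E)$ absorbs sequences that are ``small $\times$ weakly $p$-summing'' these tails lie in $Rad(E)$. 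Therefore $(P(a+x_j)-P(a))_j\in Rad(E)$ would force $(x_j)_j\in Rad(E)$, a contradiction; so $P\notin\mathcal P_{al,p}^{(a)}(^nE;E)$.

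The main obstacle is the bookkeeping in the last step: showing that every cross term $\varphi(x_j)^{k}v_j$ (with $k\ge1$ and $(v_j)_j$ either bounded, or in $\ell_p^u(E)$, or constant equal to $a$) defines a sequence in $Rad(E)$, so that it can be cancelled without affecting membership. For the constant-vector terms $\varphi(x_j)^k a$ this is immediate since $(\varphi(x_j))_j\in\ell_p\subseteq\ell_2$ and a scalar $\ell_2$-sequence times a fixed vector lies in $Rad(E)$ by Kahane/contraction; for the terms with $v_j=x_j$ one uses that the scalar factor is bounded and $(x_j)_j\in\ell_p^u(E)$, invoking the standard fact (as in \cite{Diestel,Scand}) that $\ell_p^u(E)\subseteq Rad(E)$ for $p\le 2$ together with the ideal-type contraction principle to multiply by a bounded scalar sequence. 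Once this decomposition is in place, the argument is a verbatim adaptation of \cite[Theorem~3.7]{Scand}, and, as the remark preceding the statement indicates, the details are left to the reader.
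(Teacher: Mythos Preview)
Your scheme for $(c)\Rightarrow(a)$ is fine, but the implication $(a)\Rightarrow(b)$ contains a real error. You write that the cross terms $\varphi(x_j)^{k}x_j$ lie in $Rad(E)$ by ``the standard fact that $\ell_p^u(E)\subseteq Rad(E)$ for $p\le 2$''. That inclusion is \emph{false} for every infinite-dimensional $E$: it would say precisely that the identity $id_E$ is almost $p$-summing, which is exactly what the linear Dvoretzky--Rogers theorem for $\Pi_{al,p}$ rules out. Worse, you are invoking it for the very sequence $(x_j)$ you selected with $(x_j)\notin Rad(E)$, so the argument is internally contradictory. The contraction principle does let you multiply an element of $Rad(E)$ by a bounded scalar sequence, but here the vector sequence is not in $Rad(E)$ to begin with. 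For $k\ge 2$ one can rescue the term via $\ell_1(E)\subseteq Rad(E)$, but the first-order cross term $(n-1)\varphi(x_j)\,x_j$ cannot be handled this way in an arbitrary Banach space.

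The clean fix (and the reason the argument of \cite[Theorem~3.7]{Scand} goes through verbatim) is to kill the cross terms rather than estimate them: choose the sequence $(x_j)$ inside $\ker\varphi$. Since $\ker\varphi$ has codimension one in the infinite-dimensional space $E$, it is itself infinite-dimensional, and the linear Dvoretzky--Rogers theorem yields $(x_j)_j\in\ell_p^u(\ker\varphi)\setminus Rad(\ker\varphi)\subseteq\ell_p^u(E)\setminus Rad(E)$. With $\varphi(x_j)=0$ one gets directly
\[
P(a+x_j)-P(a)=\bigl(1+\varphi(x_j)\bigr)^{\,n-1}(a+x_j)-a=x_j,
\]
so $(P(a+x_j)-P(a))_j=(x_j)_j\notin Rad(E)$ and $P\notin\mathcal P_{al,p}^{(a)}(^nE;E)$. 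This removes the bookkeeping entirely and matches the route indicated by the paper.
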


\subsection{A characterization for the space of everywhere almost summing
mappings}

In order to define an adequate norm on the space $\mathcal{L}_{al,p}%
^{ev}(E_{1},...,E_{n};F)$ we will characterize the maps $T\in\mathcal{L}%
_{al,p}^{ev}(E_{1},...,E_{n};F)$ by means of an inequality (Theorem
\ref{teorema-Caracterizacao-L(al,p,ev)}).

We will need the Contraction Principle:

\begin{theorem}
[{\cite[12.2 - Contraction Principle]{Diestel}}]\label{ContractionPrinciple}
Let $1\leq p<\infty$ and consider the randomized sum $\sum_{k=1}^{n}\chi
_{k}x_{k}$ in the Banach space $E$. Then, regardless of the choice of real
numbers $a_{1},\ldots,a_{n},$%
\[
\left(  \int_{\Omega}\left\Vert \sum_{k\leq n}a_{k}\chi_{k}\left(
\omega\right)  x_{k}\right\Vert ^{p}dP\left(  \omega\right)  \right)
^{\frac{1}{p}}\leq\left(  \max_{k\leq n}\left\vert a_{k}\right\vert \right)
.\left(  \int_{\Omega}\left\Vert \sum_{k\leq n}\chi_{k}\left(  \omega\right)
x_{k}\right\Vert ^{p}dP\left(  \omega\right)  \right)  ^{\frac{1}{p}}.
\]
In particular, if $A$ and $B$ are subsets of $\left\{  1,\ldots,n\right\}  $
such that $A\subset B$, then
\[
\left(  \int_{\Omega}\left\Vert \sum_{k\in A}\chi_{k}\left(  \omega\right)
x_{k}\right\Vert ^{p}dP\left(  \omega\right)  \right)  ^{\frac{1}{p}}%
\leq\left(  \int_{\Omega}\left\Vert \sum_{k\in B}\chi_{k}\left(
\omega\right)  x_{k}\right\Vert ^{p}dP\left(  \omega\right)  \right)
^{\frac{1}{p}}%
\]
\bigskip
\end{theorem}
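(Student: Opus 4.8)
The plan is to view the left-hand side as a seminorm in the coefficient vector $(a_{1},\dots,a_{n})$ and then exploit the sign-symmetry of the Rademacher-type (independent, symmetric $\{-1,1\}$-valued) variables $\chi_{k}$ together with convexity. First I would normalize: both sides are positively homogeneous of degree one in $(a_{1},\dots,a_{n})$, so, discarding the trivial case $a_{1}=\dots=a_{n}=0$, I may divide through by $m:=\max_{k}|a_{k}|$ and assume from now on that $|a_{k}|\le1$ for every $k$.

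Next I would introduce
\[
\Phi(b_{1},\dots,b_{n}):=\left(\int_{\Omega}\left\Vert\sum_{k\le n}b_{k}\chi_{k}(\omega)x_{k}\right\Vert^{p}\,dP(\omega)\right)^{1/p},\qquad(b_{1},\dots,b_{n})\in\mathbb{R}^{n},
\]
and record two facts. (i) $\Phi$ is a seminorm on $\mathbb{R}^{n}$: positive homogeneity is immediate, and the triangle inequality $\Phi(b+b')\le\Phi(b)+\Phi(b')$ is exactly Minkowski's inequality in $L_{p}(\Omega;E)$ applied to $\omega\mapsto\sum_{k}b_{k}\chi_{k}(\omega)x_{k}$ and $\omega\mapsto\sum_{k}b'_{k}\chi_{k}(\omega)x_{k}$. (ii) $\Phi$ is invariant under sign flips of its coordinates, i.e.\ $\Phi(\varepsilon_{1}b_{1},\dots,\varepsilon_{n}b_{n})=\Phi(b_{1},\dots,b_{n})$ for every $\varepsilon\in\{-1,1\}^{n}$; this holds because $(\varepsilon_{1}\chi_{1},\dots,\varepsilon_{n}\chi_{n})$ has the same joint distribution as $(\chi_{1},\dots,\chi_{n})$ (the $\chi_{k}$ being independent and symmetric) while the integrand depends on $\omega$ only through those values.

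The main step is then almost automatic: the point $(a_{1},\dots,a_{n})$ lies in the cube $[-1,1]^{n}=\mathrm{conv}\,\{-1,1\}^{n}$, so writing it as a convex combination $\sum_{\varepsilon}\lambda_{\varepsilon}\varepsilon$ of the vertices and using convexity of the seminorm $\Phi$ followed by (ii),
\[
\Phi(a_{1},\dots,a_{n})\le\sum_{\varepsilon}\lambda_{\varepsilon}\,\Phi(\varepsilon)=\sum_{\varepsilon}\lambda_{\varepsilon}\,\Phi(1,\dots,1)=\Phi(1,\dots,1),
\]
which, after multiplying back by $m$, is the claimed estimate. (An even more hands-on version: for fixed values of the remaining coordinates the map $t\mapsto\Phi(\dots,t,\dots)$ is convex by (i) and even by (ii), hence on $[-1,1]$ it is bounded by its common endpoint value at $t=\pm1$; running this over $k=1,\dots,n$ replaces every $a_{k}$ by $1$.) For the ``in particular'' assertion I would simply apply the inequality inside the sum over $B$ with coefficients $1$ on $A$ and $0$ on $B\setminus A$ (the case $A=\varnothing$ being trivial). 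I do not expect a genuine obstacle here; the only delicate point is fact (ii) --- the distributional invariance of the Rademacher system under coordinate sign changes --- which is precisely what makes the $2^{n}$ vertices of the cube interchangeable and lets convexity finish the job.
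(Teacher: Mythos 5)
The paper does not prove this statement at all --- it is quoted verbatim from Diestel--Jarchow--Tonge \cite[12.2]{Diestel} as a known result, so there is no in-paper proof to compare against. Your argument is correct and is essentially the standard proof from that reference: the map $(a_1,\dots,a_n)\mapsto\Phi(a_1,\dots,a_n)$ is a seminorm (hence convex) on $\mathbb{R}^n$, the joint symmetry and independence of the $\chi_k$ make $\Phi$ invariant under coordinate sign flips, and writing a point of $[-1,1]^n$ as a convex combination of the vertices $\{-1,1\}^n$ bounds $\Phi(a)$ by $\Phi(1,\dots,1)$; the ``in particular'' clause then follows by taking coefficients equal to $1$ on $A$ and $0$ on $B\setminus A$ (with $A=\varnothing$ trivial). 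The one point worth making explicit is that symmetry of each $\chi_k$ \emph{together with independence} is what gives the equidistribution of $(\varepsilon_1\chi_1,\dots,\varepsilon_n\chi_n)$ with $(\chi_1,\dots,\chi_n)$, which you do note; with that, the proof is complete.
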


The following result is a simple consequence of the Contraction Principle:

\begin{lemma}
\label{ContracaoInfinito} If $\left(  x_{j}\right)  _{j=1}^{\infty}\in
Rad(F)$, then
\[
\left\Vert x_{n}\right\Vert \leq2\left\Vert \left(  x_{j}\right)
_{j=1}^{\infty}\right\Vert _{Rad(F)}%
\]
for all $n$.
\end{lemma}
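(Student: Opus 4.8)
The plan is to derive the bound directly from the Contraction Principle (Theorem~\ref{ContractionPrinciple}) by a truncation argument, exploiting that $Rad(F)$ is defined via $L_2$-norms of Rademacher sums. Fix $(x_j)_{j=1}^{\infty}\in Rad(F)$ and fix $n$. For each $m\geq n$ consider the partial sum $S_m=\sum_{j=1}^{m}r_j(\cdot)x_j$, which converges in $L_2([0,1];F)$ to some $S$ with $\|S\|_{L_2}=\|(x_j)_j\|_{Rad(F)}$.

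The key step is to isolate the single vector $x_n$. First I would apply the ``in particular'' part of the Contraction Principle with $A=\{n\}\subset B=\{1,\dots,m\}$ to get
\[
\left(\int_0^1\|r_n(t)x_n\|^2\,dt\right)^{1/2}\leq\left(\int_0^1\Big\|\sum_{j=1}^{m}r_j(t)x_j\Big\|^2\,dt\right)^{1/2}=\|S_m\|_{L_2}.
\]
Since $|r_n(t)|=1$ a.e., the left-hand side is exactly $\|x_n\|$, so $\|x_n\|\leq\|S_m\|_{L_2}$ for every $m\geq n$, and letting $m\to\infty$ gives $\|x_n\|\leq\|(x_j)_j\|_{Rad(F)}$. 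This actually yields the bound with constant $1$ rather than $2$; the factor $2$ in the statement is a harmless weakening (it would be the constant one gets if one instead compared $\sum_{j\leq n}$ with $\sum_{j< n}$ and used the triangle inequality, or worked with a general sub-Gaussian system where $|r_n|\le 1$ only in norm). I would simply record the cleaner estimate, which a fortiori gives the claimed inequality.

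There is essentially no obstacle here: the only point requiring a line of care is the justification that the finite-sum version of the Contraction Principle passes to the limit, which is immediate because $S_m\to S$ in $L_2([0,1];F)$ by definition of $Rad(F)$ and the $L_2$-norm is continuous. If one prefers to avoid even mentioning the limit, one can instead invoke the Contraction Principle in the form already stated for the countable randomized sum (reading $\sum_{k\le n}$ as the convergent series) with the coefficient sequence $a_k=\delta_{kn}$, obtaining $\|x_n\|=\|(r_j(\cdot)a_j x_j)_j\|_{Rad(F)}\le\|(x_j)_j\|_{Rad(F)}$ in one stroke.
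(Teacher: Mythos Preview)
Your proof is correct and in fact sharper than the paper's. Both arguments rest on the Contraction Principle, but the paper writes $\|x_{n+1}\|$ as the $L_2$-norm of the difference of the consecutive partial sums $\sum_{j\le n+1}r_j x_j$ and $\sum_{j\le n}r_j x_j$, applies the triangle inequality, and then uses monotonicity of the partial-sum norms (itself a consequence of the Contraction Principle) to bound each piece by $\|(x_j)\|_{Rad(F)}$, which is where the factor $2$ comes from. You instead apply the subset form of the Contraction Principle directly with $A=\{n\}\subset\{1,\dots,m\}$, obtaining the constant $1$ in one step. Your route is shorter and gives the optimal constant; the paper's argument has the incidental advantage that the monotonicity it establishes is reused immediately afterward to pass to the limit, but since you justify the limit via the $L_2$-convergence built into the definition of $Rad(F)$, nothing is missing.
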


\begin{proof}
From the Contraction Principle we know that
\[
\left(
{\displaystyle\int_{0}^{1}}
\left\Vert
{\displaystyle\sum\limits_{j=1}^{n}}
r_{j}(t)x_{j}\right\Vert ^{2}dt\right)  _{n=1}^{\infty}\uparrow%
{\displaystyle\int_{0}^{1}}
\left\Vert
{\displaystyle\sum\limits_{j=1}^{\infty}}
r_{j}(t)x_{j}\right\Vert ^{2}dt.
\]
So, for all $n$, we have%
\begin{align*}
\left\Vert x_{n+1}\right\Vert  &  =\left(
{\displaystyle\int_{0}^{1}}
\left\Vert
{\displaystyle\sum\limits_{j=1}^{n+1}}
r_{j}(t)x_{j}-%
{\displaystyle\sum\limits_{j=1}^{n}}
r_{j}(t)x_{j}\right\Vert ^{2}dt\right)  ^{\frac{1}{2}}\\
&  \leq\left(
{\displaystyle\int_{0}^{1}}
\left\Vert
{\displaystyle\sum\limits_{j=1}^{n+1}}
r_{j}(t)x_{j}\right\Vert ^{2}dt\right)  ^{\frac{1}{2}}+\left(
{\displaystyle\int_{0}^{1}}
\left\Vert
{\displaystyle\sum\limits_{j=1}^{n}}
r_{j}(t)x_{j}\right\Vert ^{2}dt\right)  ^{\frac{1}{2}}\leq2\left(
{\displaystyle\int_{0}^{1}}
\left\Vert
{\displaystyle\sum\limits_{j=1}^{\infty}}
r_{j}(t)x_{j}\right\Vert ^{2}dt\right)  ^{\frac{1}{2}}.
\end{align*}

\end{proof}

From the previous lemma we easily conclude that when $\left(  y_{j}%
^{(n)}\right)  _{j=1}^{\infty}\in Rad(F)$ and
\[
\lim_{n\rightarrow\infty}\left(  y_{j}^{(n)}\right)  _{j=1}^{\infty}%
=(y_{j})_{j=1}^{\infty}\in Rad(F),
\]
then
\[
\lim_{n\rightarrow\infty}y_{j}^{(n)}=y_{j}%
\]
for all $j$.

The following lemma, which proof can be obtained, \textit{mutatis mutandis},
from \cite[Lemma 9.2]{BBJP} is crucial for the proof of the main result of
this section:

\begin{lemma}
\label{tr}If $T\in\mathcal{L}_{al,p}^{ev}(E_{1},...,E_{n};F)$ and $\left(
a_{1},...,a_{n}\right)  \in E_{1}\times\cdots\times E_{n},$ then there is a
real number $C_{a_{1}...a_{n}}$ so that%
\[%
{\displaystyle\int_{0}^{1}}
\left\Vert
{\displaystyle\sum\limits_{j=1}^{\infty}}
r_{j}\left(  t\right)  \left(  T\left(  a_{1}+x_{j}^{(1)},...,a_{n}%
+x_{j}^{(n)}\right)  -T\left(  a_{1},...,a_{n}\right)  \right)  \right\Vert
^{2}dt\leq C_{a_{1}...a_{n}},
\]
whenever $\left(  x_{j}^{(k)}\right)  _{j=1}^{\infty}\in B_{\ell_{p}%
^{u}\left(  E_{k}\right)  },$ $k=1,...,n.$
\end{lemma}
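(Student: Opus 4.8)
The plan is to prove Lemma \ref{tr} by a closed-graph/uniform-boundedness argument, following the pattern of \cite[Lemma 9.2]{BBJP} but inserting the Rademacher machinery developed just above (Lemma \ref{ContracaoInfinito} and the remark following it) wherever the absolutely summing argument would have used coordinatewise estimates in $\ell_p$-norms. First I would observe that it suffices to produce, for each fixed $\left(a_1,\dots,a_n\right)$, a single finite constant bounding the displayed integral uniformly over the product of closed unit balls $B_{\ell_p^u(E_1)}\times\cdots\times B_{\ell_p^u(E_n)}$; since $T$ is almost $p$-summing at \emph{every} point, in particular it is almost $p$-summing at $\left(a_1,\dots,a_n\right)$, so for each choice of sequences the quantity inside the supremum is finite — the content of the lemma is that it is \emph{uniformly} finite on the unit balls.

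The key step is to set up an appropriate map and invoke a Banach-space completeness argument. Concretely, I would consider the linear map
\[
\Phi\colon \ell_p^u(E_1)\times\cdots\times\ell_p^u(E_n)\longrightarrow Rad(F),\qquad \Phi\!\left(\left(x_j^{(1)}\right)_j,\dots,\left(x_j^{(n)}\right)_j\right)=\left(T\!\left(a_1+x_j^{(1)},\dots,a_n+x_j^{(n)}\right)-T(a_1,\dots,a_n)\right)_j,
\]
which is well defined precisely because $T\in\mathcal{L}_{al,p}^{ev}$. This $\Phi$ is $n$-linear only after the shift is absorbed — it is actually an affine-type map — so rather than linearity I would use the closed graph theorem on each variable separately, or better, argue as in \cite{BBJP}: expand $T(a_1+x^{(1)}_j,\dots,a_n+x^{(n)}_j)-T(a_1,\dots,a_n)$ by multilinearity into a sum of $2^n-1$ terms, each of which is genuinely multilinear (and at least linear) in the variables $x^{(k)}_j$ that appear, and treat each term by a closed graph argument. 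For a term that depends on, say, the variables indexed by a nonempty subset $S\subseteq\{1,\dots,n\}$, the corresponding map is $|S|$-linear and continuous from $\prod_{k\in S}\ell_p^u(E_k)$ into $Rad(F)$; to see continuity one uses that if $\left(x_j^{(k),m}\right)_j\to \left(x_j^{(k)}\right)_j$ in $\ell_p^u(E_k)$ as $m\to\infty$ and the images converge in $Rad(F)$ to some $(y_j)_j$, then by the remark after Lemma \ref{ContracaoInfinito} the convergence in $Rad(F)$ forces coordinatewise convergence $y_j^{(m)}\to y_j$, while continuity of $T$ in each variable gives the correct pointwise limit, so $(y_j)_j$ is the image of the limit and the graph is closed. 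Then boundedness of each multilinear term on the product of unit balls follows, and summing the finitely many bounds yields $C_{a_1\dots a_n}$.

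The main obstacle I anticipate is the bookkeeping needed to reduce the shifted expression to genuinely multilinear pieces and to verify that $Rad(F)$ is the right target: one must check that each partial expression $(T(\dots,x_j^{(k)}\text{ in slots }k\in S,\dots,a_\ell\text{ in slots }\ell\notin S,\dots))_j$ actually lies in $Rad(F)$ for all choices of $\left(x_j^{(k)}\right)_j\in\ell_p^u(E_k)$ — this is not literally the defining property of $\mathcal{L}_{al,p}^{ev}$, but it follows by specializing the "every point" hypothesis (put $a_\ell$ in the fixed slots and $0$ in the active slots, then use multilinearity and the fact that $Rad(F)$ is a linear subspace closed under the relevant operations, together with Lemma \ref{ContracaoInfinito} to control the subtracted constant term). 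Once that membership is established, the closed graph theorem applies term by term exactly as in \cite{BBJP}, and since all but the routine adaptations are covered there, I would present only the membership verification and the closed-graph continuity argument in detail, stating that the remainder is obtained \textit{mutatis mutandis}.
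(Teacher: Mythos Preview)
Your proposal is correct and follows precisely the route the paper indicates: the paper does not prove Lemma~\ref{tr} at all but simply states that its proof is obtained \textit{mutatis mutandis} from \cite[Lemma 9.2]{BBJP}, and your sketch is exactly that adaptation --- the multilinear expansion into $2^{n}-1$ pieces, the verification that each piece lands in $Rad(F)$ via the ``every point'' hypothesis (with base points $a_\ell$ in the inactive slots and $0$ in the active ones, so that the subtracted term vanishes by multilinearity), and the closed-graph argument using the coordinatewise convergence supplied by the remark after Lemma~\ref{ContracaoInfinito}. One minor remark: in your membership verification the subtracted constant $T(0,\dots,a_\ell,\dots)$ is already zero whenever $S\neq\emptyset$, so the appeal to Lemma~\ref{ContracaoInfinito} there is unnecessary, though harmless.
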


From now on, if $T\in\mathcal{L}_{al,p}^{ev}(E_{1},...,E_{n};F),$ we consider
the $n$-linear map
\begin{align*}
\phi_{T}  &  :G_{1}\times\cdots\times G_{n}\rightarrow Rad(F)\\
\phi_{T}\left(  \left(  a_{1},\left(  x_{j}^{(1)}\right)  _{j=1}^{\infty
}\right)  ,...,\left(  a_{n},\left(  x_{j}^{(n)}\right)  _{j=1}^{\infty
}\right)  \right)   &  =\left(  T\left(  a_{1}+x_{j}^{(1)},...,a_{n}%
+x_{j}^{(n)}\right)  -T\left(  a_{1},...,a_{n}\right)  \right)  _{j=1}%
^{\infty},
\end{align*}
where
\[
G_{s}=E_{s}\times\ell_{p}^{u}\left(  E_{s}\right)  ,\text{ }s=1,..,n,
\]
is endowed with the norm%
\[
\left\Vert (a,(x_{j})_{j=1}^{\infty})\right\Vert _{G_{s}}=\max\left\{
\left\Vert a\right\Vert ,\left\Vert (x_{j})_{j=1}^{\infty}\right\Vert
_{w,p}\right\}  .
\]
The next result is inspired on \cite{Scand} and \cite{Matos-N}, but some parts
(specially the proof of (d)$\Rightarrow$(a)) need a different approach:

\begin{theorem}
\label{teorema-Caracterizacao-L(al,p,ev)} The following assertions are equivalent:

(a) $T\in\mathcal{L}_{al,p}^{ev}(E_{1},...,E_{n};F).$

(b) $\phi_{T}$ is well-defined and continuous.

(c) There is a $C\geq0$ so that%
\begin{equation}
\left(
{\displaystyle\int_{0}^{1}}
\left\Vert
{\displaystyle\sum\limits_{j=1}^{\infty}}
r_{j}\left(  t\right)  \left(  T\left(  a_{1}+x_{j}^{(1)},...,a_{n}%
+x_{j}^{(n)}\right)  -T\left(  a_{1},...,a_{n}\right)  \right)  \right\Vert
^{2}dt\right)  ^{\frac{1}{2}}\leq C%
{\displaystyle\prod\limits_{k=1}^{n}}
\left(  \left\Vert a_{k}\right\Vert +\left\Vert \left(  x_{j}^{(k)}\right)
_{j=1}^{\infty}\right\Vert _{w,p}\right) \nonumber
\end{equation}
for all $\left(  x_{j}^{(k)}\right)  _{j=1}^{\infty}\in\ell_{p}^{u}(E_{k}),$
$k=1,...,n$ and $(a_{1},...,a_{n})\in E_{1}\times\cdots\times E_{n}.$

(d) There is a $C\geq0$ so that%
\begin{equation}
\left(
{\displaystyle\int_{0}^{1}}
\left\Vert
{\displaystyle\sum\limits_{j=1}^{m}}
r_{j}\left(  t\right)  \left(  T\left(  a_{1}+x_{j}^{(1)},...,a_{n}%
+x_{j}^{(n)}\right)  -T\left(  a_{1},...,a_{n}\right)  \right)  \right\Vert
^{2}dt\right)  ^{\frac{1}{2}}\leq C%
{\displaystyle\prod\limits_{k=1}^{n}}
\left(  \left\Vert a_{k}\right\Vert +\left\Vert \left(  x_{j}^{(k)}\right)
_{j=1}^{m}\right\Vert _{w,p}\right)  \label{ptr}%
\end{equation}
for all positive integer $m,$ $x_{j}^{(k)}\in E_{k},$ $k=1,...,n$ e
$(a_{1},...,a_{n})\in E_{1}\times\cdots\times E_{n}.$
\end{theorem}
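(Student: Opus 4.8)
The plan is to prove the cycle of implications (a)$\Rightarrow$(b)$\Rightarrow$(c)$\Rightarrow$(d)$\Rightarrow$(a). The first three are the routine adaptations from \cite{Scand, Matos-N, BBJP} and should be dispatched quickly; the real work is (d)$\Rightarrow$(a).

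For (a)$\Rightarrow$(b): by Lemma \ref{tr}, $\phi_{T}$ lands in $Rad(F)$ (well-definedness) and is bounded on the product of unit balls of the $G_{s}$; since $\phi_{T}$ is $n$-linear, boundedness on the product of balls is equivalent to continuity. For (b)$\Rightarrow$(c): continuity of the $n$-linear map $\phi_{T}$ gives a constant $C$ with $\|\phi_{T}(\cdots)\|_{Rad(F)}\leq C\prod_{k}\|(a_k,(x_j^{(k)})_j)\|_{G_k}=C\prod_k\max\{\|a_k\|,\|(x_j^{(k)})_j\|_{w,p}\}$, and one replaces the $\max$ by the sum $\|a_k\|+\|(x_j^{(k)})_j\|_{w,p}$, which only enlarges the right-hand side; the left-hand side is exactly the integral in (c). For (c)$\Rightarrow$(d): given finite data $x_1^{(k)},\dots,x_m^{(k)}\in E_k$, pad with zeros to get sequences in $\ell_p^u(E_k)$ whose $\|\cdot\|_{w,p}$-norm equals $\|(x_j^{(k)})_{j=1}^m\|_{w,p}$; apply (c) and use the Contraction Principle (Theorem \ref{ContractionPrinciple}) to pass from the infinite randomized sum down to the partial sum $\sum_{j=1}^m$, obtaining exactly \eqref{ptr}.

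The hard part is (d)$\Rightarrow$(a), i.e.\ upgrading the finite inequality \eqref{ptr} (which a priori only controls finite randomized sums for \emph{arbitrary finitely supported vectors}) to the conclusion that for genuinely $\ell_p^u$-summable input sequences the infinite randomized sum converges in $L_2(F)$, i.e.\ $(T(a_1+x_j^{(1)},\dots)-T(a_1,\dots))_j\in Rad(F)$. First I would fix $(a_1,\dots,a_n)\in E_1\times\cdots\times E_n$ and sequences $(x_j^{(k)})_{j=1}^\infty\in\ell_p^u(E_k)$, and show the sequence of partial sums $S_m(t)=\sum_{j=1}^m r_j(t)(T(a_1+x_j^{(1)},\dots,a_n+x_j^{(n)})-T(a_1,\dots,a_n))$ is Cauchy in $L_2([0,1];F)$. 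For $m<m'$, the difference $S_{m'}-S_m$ is the randomized sum over the block $j=m+1,\dots,m'$; applying \eqref{ptr} to the shifted/truncated data (vectors $x_j^{(k)}$ for $j$ in the block, same $a_k$) does \emph{not} immediately give smallness because the factor $\|a_k\|$ does not tend to $0$. This is the subtlety flagged in the introduction and in \cite[Pag.\ 27]{Botelho-Nach}: one must separate the ``constant'' part $a_k$ from the ``tail'' part. The device is multilinear expansion: write $T(a_1+x_j^{(1)},\dots,a_n+x_j^{(n)})-T(a_1,\dots,a_n)$ as a sum, over nonempty subsets $\emptyset\neq A\subseteq\{1,\dots,n\}$, of terms $T$ evaluated with $x_j^{(k)}$ in the slots $k\in A$ and $a_k$ in the slots $k\notin A$. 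Each such term, as a function of $(x_j^{(k)})_{k\in A}$ with the $a_k$'s frozen, is the restriction of a $|A|$-linear map, and \eqref{ptr} applied to it (choosing the data to be zero outside $A$, so the padded product on the right collapses to $\prod_{k\in A}\|(x_j^{(k)})_{\text{block}}\|_{w,p}$ times the fixed constants $\prod_{k\notin A}\|a_k\|$) shows the corresponding randomized sum over the block $j=m+1,\dots,m'$ has $L_2$-norm bounded by $C\big(\prod_{k\notin A}\|a_k\|\big)\prod_{k\in A}\|(x_j^{(k)})_{j=m+1}^{m'}\|_{w,p}$. Since each $(x_j^{(k)})_j\in\ell_p^u(E_k)$, the tail weak norms $\|(x_j^{(k)})_{j>m}\|_{w,p}\to 0$ as $m\to\infty$, so each block piece $\to 0$ and hence $S_{m'}-S_m\to 0$; thus $(S_m)$ converges in $L_2(F)$, which by definition of $Rad(F)$ means the target sequence lies in $Rad(F)$, i.e.\ $T\in\mathcal{L}_{al,p}^{ev}(E_1,\dots,E_n;F)$.

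A couple of technical points I would make sure to address: to legitimately apply \eqref{ptr} to ``block'' data I relabel the finitely many vectors $x_{m+1}^{(k)},\dots,x_{m'}^{(k)}$ as $x_1^{(k)},\dots,x_{m'-m}^{(k)}$, which is harmless since \eqref{ptr} is stated for all positive integers and all choices of vectors; and I note $\|(x_j^{(k)})_{j=m+1}^{m'}\|_{w,p}\leq\|(x_j^{(k)})_{j=m+1}^{\infty}\|_{w,p}$, the latter tending to $0$ by the definition of $\ell_p^u$. I would also remark that, having shown $\phi_T$ is well-defined, its continuity — hence the full equivalence with (b) and (c) — follows as above, closing the loop. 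The argument for the polynomial version and for arbitrary base point $a$ is identical after the obvious notational changes, which I would leave to the reader as indicated in the text.
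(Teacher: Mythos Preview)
Your treatment of (d)$\Rightarrow$(a) is correct and is essentially the paper's argument: both use the multilinear expansion of $T(a_1+x_j^{(1)},\dots,a_n+x_j^{(n)})-T(a_1,\dots,a_n)$ into the terms indexed by nonempty $A\subseteq\{1,\dots,n\}$, and both feed each piece back into \eqref{ptr} with a tailored choice of data (base point $0$ and sequence $(x_j^{(k)})$ in the $A$-slots, base point $a_k$ and zero sequence in the $A^c$-slots) so that the right-hand side carries at least one tail factor $\|(x_j^{(k)})_{j>m}\|_{w,p}\to 0$. The paper writes this out step by step for $n=2$ (first $a=0$, then one nonzero coordinate, then subtract to isolate the mixed term); your subset-indexed presentation is a slightly cleaner packaging of the same idea.

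There is, however, a genuine gap in your sketch of (a)$\Rightarrow$(b). You assert that Lemma~\ref{tr} gives boundedness of $\phi_T$ on the product of the unit balls of the $G_s$, but it does not: the constant $C_{a_1\cdots a_n}$ produced by Lemma~\ref{tr} depends on the base point $(a_1,\dots,a_n)$, so the lemma only yields, for each \emph{fixed} $a$, a bound uniform over sequences in $B_{\ell_p^u(E_k)}$. Upgrading this pointwise-in-$a$ estimate to a bound uniform over $\|a_k\|\leq 1$ is precisely the nontrivial step, and it does not follow from $n$-linearity and well-definedness alone. The paper closes this gap with a Baire category argument: one shows that the sets
\[
F_k=\bigcap_{(x_j^{(r)})\in B_{\ell_p^u(E_r)}}\Bigl\{(a_1,\dots,a_n):\bigl\|\phi_T\bigl((a_1,(x_j^{(1)})),\dots,(a_n,(x_j^{(n)}))\bigr)\bigr\|_{Rad(F)}\leq k\Bigr\}
\]
are closed (writing each as an intersection of finite-$m$ versions via the Contraction Principle and the continuity of $T$), Lemma~\ref{tr} gives $\bigcup_k F_k=E_1\times\cdots\times E_n$, Baire then produces a $k_0$ with $F_{k_0}$ having nonempty interior, and continuity follows as in \cite[Theorem~4.1]{Scand} or \cite[Proposition~9.3]{BBJP}. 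Your outline should either incorporate this Baire step or at least flag explicitly that Lemma~\ref{tr} by itself is not enough and point to the Baire argument in the cited references.
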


\begin{proof}
$(a)\Rightarrow(b)$\newline Let us show that $\phi_{T}$ is continuous. We
first show that the set%
\[
F_{k,\left(  x_{j}^{(1)}\right)  _{j=1}^{\infty},\ldots,\left(  x_{j}%
^{(n)}\right)  _{j=1}^{\infty}}=\left\{
\begin{array}
[c]{c}%
(a_{1},\ldots,a_{n})\in E_{1}\times\ldots\times E_{n};\\
\left\Vert \phi_{T}\left(  \left(  a_{1},\left(  x_{j}^{(1)}\right)
_{j=1}^{\infty}\right)  ,\ldots,\left(  a_{n},\left(  x_{j}^{(n)}\right)
_{j=1}^{\infty}\right)  \right)  \right\Vert _{Rad(F)}\leq k
\end{array}
\right\}
\]

is closed for all $k$ and $\left(  x_{j}^{(r)}\right)  _{j=1}^{\infty}\in
B_{\ell_{p}^{u}(E_{r})}$. For this task, consider the sets
\[
F_{k,\left(  x_{j}^{(1)}\right)  _{j=1}^{m},\ldots,\left(  x_{j}^{(n)}\right)
_{j=1}^{m}}=\left\{  (a_{1},\ldots,a_{n})\in E_{1}\times\ldots\times
E_{n};\left\Vert \varphi_{m}(T)\left(  a_{1},\ldots,a_{n}\right)  \right\Vert
_{Rad(F)}\leq k\right\}  ,
\]

where $\varphi_{m}(T)$ is the map%
\begin{align*}
\varphi_{m}\left(  T\right)   &  :E_{1}\times\cdots\times E_{n}\rightarrow
Rad(F)\\
\varphi_{m}\left(  T\right)  \left(  a_{1},\ldots,a_{n}\right)   &  =\left(
T\left(  a_{1}+x_{j}^{(1)},\ldots,a_{n}+x_{j}^{(n)}\right)  -T\left(
a_{1},\ldots,a_{n}\right)  \right)  _{j=1}^{m}.
\end{align*}
Using the continuity of $T$ we can show that $\varphi_{m}(T)$ is also
continuous. Hence
\[
F_{k,\left(  x_{j}^{(1)}\right)  _{j=1}^{m},\ldots,\left(  x_{j}^{(n)}\right)
_{j=1}^{m}}%
\]
is closed. Now we will show that
\[
F_{k,\left(  x_{j}^{(1)}\right)  _{j=1}^{\infty},\ldots,\left(  x_{j}%
^{(n)}\right)  _{j=1}^{\infty}}=\bigcap\limits_{m}F_{k,\left(  x_{j}%
^{(1)}\right)  _{j=1}^{m},\ldots,\left(  x_{j}^{(n)}\right)  _{j=1}^{m}}.
\]

If $(a_{1},\ldots,a_{n})\in F_{k,\left(  x_{j}^{(1)}\right)  _{j=1}^{\infty
},\ldots,\left(  x_{j}^{(n)}\right)  _{j=1}^{\infty}}$, then
\[
\left(  {\int_{0}^{1}}\left\Vert {\sum\limits_{j=1}^{\infty}}r_{j}\left(
t\right)  \left[  T\left(  a_{1}+x_{j}^{(1)},\ldots,a_{n}+x_{j}^{(n)}\right)
-T\left(  a_{1},...,a_{n}\right)  \right]  \right\Vert ^{2}dt\right)
^{1/2}\leq k.
\]
and using the Contraction Principle we have%
\[
(a_{1},\ldots,a_{n})\in\bigcap\limits_{m}F_{k,\left(  x_{j}^{(1)}\right)
_{j=1}^{m},\ldots,\left(  x_{j}^{(n)}\right)  _{j=1}^{m}}.
\]

On the other hand, if
\[
(a_{1},\ldots,a_{n})\in\bigcap\limits_{m}F_{k,\left(  x_{j}^{(1)}\right)
_{j=1}^{m},\ldots,\left(  x_{j}^{(n)}\right)  _{j=1}^{m}},
\]
then
\begin{align}
&  \lim_{m\rightarrow\infty}\left(  {\int_{0}^{1}}\left\Vert {\sum
\limits_{j=1}^{m}}r_{j}\left(  t\right)  \left[  T\left(  a_{1}+x_{j}%
^{(1)},\ldots,a_{n}+x_{j}^{(n)}\right)  -T\left(  a_{1},...,a_{n}\right)
\right]  \right\Vert ^{2}dt\right)  ^{1/2}\nonumber\\
&  =\left(  {\int_{0}^{1}}\left\Vert {\sum\limits_{j=1}^{\infty}}r_{j}\left(
t\right)  \left[  T\left(  a_{1}+x_{j}^{(1)},\ldots,a_{n}+x_{j}^{(n)}\right)
-T\left(  a_{1},...,a_{n}\right)  \right]  \right\Vert ^{2}dt\right)
^{1/2}\nonumber
\end{align}
and since
\[
\left(  {\int_{0}^{1}}\left\Vert {\sum\limits_{j=1}^{m}}r_{j}\left(  t\right)
\left[  T\left(  a_{1}+x_{j}^{(1)},\ldots,a_{n}+x_{j}^{(n)}\right)  -T\left(
a_{1},...,a_{n}\right)  \right]  \right\Vert ^{2}dt\right)  ^{1/2}\leq k,
\]
for all $m\in\mathbb{N}$, we conclude that
\[
(a_{1},\ldots,a_{n})\in F_{k,\left(  x_{j}^{(1)}\right)  _{j=1}^{\infty
},\ldots,\left(  x_{j}^{(n)}\right)  _{j=1}^{\infty}}.
\]
So the set $F_{k,\left(  x_{j}^{(1)}\right)  _{j=1}^{\infty},\ldots,\left(
x_{j}^{(n)}\right)  _{j=1}^{\infty}}$ is closed.

Now, consider the set
\[
F_{k}:=\bigcap F_{k,\left(  x_{j}^{(1)}\right)  _{j=1}^{\infty},\ldots,\left(
x_{j}^{(n)}\right)  _{j=1}^{\infty}},
\]
where $\left(  x_{j}^{(r)}\right)  _{j=1}^{\infty}\in B_{\ell_{p}^{u}(E_{r}%
)},r=1,\ldots,n$. From Lemma \ref{tr} it is obvious that
\[
E_{1}\times\cdots\times E_{n}=\bigcup\limits_{k\in\mathbb{N}}F_{k}.
\]
Using Baire Category Theorem we know that there is $k_{0}$ so that $F_{k_{0}}$
has an interior point and repeating the proof of \cite[Prop 9.3]{BBJP} (or
\cite[Theorem 4.1]{Scand}) we get the result.

$(b)\Rightarrow(c)$ is immediate.

$(c)\Rightarrow(d)$ is also simple.

$(d)\Rightarrow(a)$. We prove the case $n=2;$ the other cases are similar. We
shall prove that
\[
\left(  T\left(  a_{1}+x_{j}^{(1)},a_{2}+x_{j}^{(2)}\right)  -T\left(
a_{1},a_{2}\right)  \right)  _{j=1}^{\infty}\in Rad(F),
\]
for all $(a_{1},a_{2})\in E_{1}\times E_{2}$ and $(x_{j}^{(i)})_{j=1}^{\infty
}\in\ell_{p}^{u}(E_{i})$, $i=1,2$.

For $a_{1}=a_{2}=0,$ using (\ref{ptr}) we can conclude that the sequence
\[
S_{m}(.)={\sum\limits_{j=1}^{m}}r_{j}\left(  .\right)  T\left(  x_{j}%
^{(1)},x_{j}^{(2)}\right)
\]
is a Cauchy sequence on $L_{2}\left(  [0,1],F\right)  $, and hence
\begin{equation}
\left(  T\left(  x_{j}^{(1)},x_{j}^{(2)}\right)  \right)  _{j=1}^{\infty}\in
Rad(F). \label{finito1-->QuaseSomante}%
\end{equation}

For $a_{1}=0$ and $a_{2}\neq0$, we have
\[
\left(  T\left(  0+x_{j}^{(1)},a_{2}+x_{j}^{(2)}\right)  -T(0,a_{2})\right)
_{j=1}^{\infty}=\left(  T\left(  x_{j}^{(1)},a_{2}\right)  +T\left(
x_{j}^{(1)},x_{j}^{(2)}\right)  \right)  _{j=1}^{\infty}.
\]
From our hypothesis, we have
\begin{align*}
&  \left(  {\int_{0}^{1}}\left\Vert {\sum\limits_{j=1}^{m}}r_{j}\left(
t\right)  \left(  T\left(  0+x_{j}^{(1)},a_{2}+x_{j}^{(2)}\right)
-T(0,a_{2})\right)  \right\Vert ^{2}dt\right)  ^{1/2}\\
&  \leq C\left(  \left\Vert \left(  x_{j}^{(1)}\right)  _{j=1}^{m}\right\Vert
_{w,p}\right)  \left(  \left\Vert a_{2}\right\Vert +\left\Vert \left(
x_{j}^{(2)}\right)  _{j=1}^{m}\right\Vert _{w,p}\right)
\end{align*}
and hence the sequence $S_{m}(.)={\sum\limits_{j=1}^{m}}r_{j}\left(  .\right)
\left(  T\left(  x_{j}^{(1)},a_{2}\right)  +T\left(  x_{j}^{(1)},x_{j}%
^{(2)}\right)  \right)  $ is Cauchy on $L_{2}\left(  [0,1],F\right)  ;$ we
thus conclude that
\begin{equation}
\left(  T\left(  x_{j}^{(1)},a_{2}\right)  +T\left(  x_{j}^{(1)},x_{j}%
^{(2)}\right)  \right)  _{j=1}^{\infty}\in Rad(F). \label{kkhhg}%
\end{equation}
Now, from (\ref{finito1-->QuaseSomante}) and (\ref{kkhhg}) we get%
\[
\left(  T\left(  x_{j}^{(1)},a_{2}\right)  \right)  _{j=1}^{\infty}\in
Rad(F).
\]

In a similar way we conclude that%
\[
\left(  T\left(  a_{1},x_{j}^{(2)}\right)  \right)  _{j=1}^{\infty}\in Rad(F)
\]

and finally
\begin{align}
&  \left(  T\left(  a_{1}+x_{j}^{(1)},a_{2}+x_{j}^{(2)}\right)  -T\left(
a_{1},a_{2}\right)  \right)  _{j=1}^{\infty}\nonumber\\
&  =\left(  T\left(  x_{j}^{(1)},x_{j}^{(2)}\right)  \right)  _{j=1}^{\infty
}+\left(  T\left(  x_{j}^{(1)},a_{2}\right)  \right)  _{j=1}^{\infty}+\left(
T\left(  a_{1},x_{j}^{(2)}\right)  \right)  _{j=1}^{\infty}\in Rad(F).
\end{align}

\end{proof}

\begin{remark}
\label{yb}When $n=1$ we recover the concept of almost $p$-summing linear
operators and the infimum of the $C$ satisfying Theorem
\ref{teorema-Caracterizacao-L(al,p,ev)} coincides with the norm for the
operator ideal $\Pi_{al,p}$.
\end{remark}

\section{A well-behaved norm for $\mathcal{L}_{al,p}^{ev}(E_{1},...,E_{n};F)$}

Using the previous characterization we can consider a natural norm for
$\mathcal{L}_{al,p}^{ev}(E_{1},...,E_{n};F)$ and show that under this norm
$\mathcal{L}_{al,p}^{ev}$ is a Banach multi-ideal. The proofs of Proposition
\ref{norma-Ev-Al-P} and Proposition \ref{ImagemPhiFechada} follow the similar
results from \cite{Scand} and we omit:

\begin{proposition}
\label{norma-Ev-Al-P} The map
\[
\left\{
\begin{array}
[c]{c}%
\left\Vert \cdot\right\Vert _{al,p}^{ev}:\mathcal{L}_{al,p}^{ev}%
(E_{1},...,E_{n};F)\rightarrow\lbrack0,\infty\lbrack\\
\left\Vert T\right\Vert _{al,p}^{ev}=\left\Vert \phi_{T}\right\Vert
\end{array}
\right.
\]
is a norm on $\mathcal{L}_{al,p}^{ev}(E_{1},...,E_{n};F).$
\end{proposition}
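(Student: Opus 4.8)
The plan is to verify the three norm axioms for $\left\Vert \cdot\right\Vert_{al,p}^{ev}$ defined by $\left\Vert T\right\Vert_{al,p}^{ev}=\left\Vert \phi_T\right\Vert$, where $\phi_T$ is the continuous $n$-linear map into $Rad(F)$ introduced before Theorem~\ref{teorema-Caracterizacao-L(al,p,ev)}. By that theorem, $T\in\mathcal{L}_{al,p}^{ev}(E_1,\ldots,E_n;F)$ if and only if $\phi_T$ is well-defined and continuous, so $\left\Vert \phi_T\right\Vert<\infty$ and the map is at least well-defined on the right space. First I would check that $T\mapsto\phi_T$ is linear: directly from the defining formula, $\phi_{S+T}=\phi_S+\phi_T$ and $\phi_{\lambda T}=\lambda\phi_T$, since the increment $(a+x_j)\mapsto T(a+x_j)-T(a)$ is linear in $T$. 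Hence $\left\Vert \cdot\right\Vert_{al,p}^{ev}$ inherits the triangle inequality and absolute homogeneity from the operator norm on $n$-linear maps $G_1\times\cdots\times G_n\to Rad(F)$.

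The one genuinely nontrivial point is positive definiteness: $\left\Vert T\right\Vert_{al,p}^{ev}=0\Rightarrow T=0$. Here I would argue that $\phi_T=0$ forces $T(a_1+x_1^{(1)},\ldots,a_n+x_n^{(n)})=T(a_1,\ldots,a_n)$ for all choices of $a_k\in E_k$ and $(x_j^{(k)})_j\in\ell_p^u(E_k)$; taking finitely supported sequences (which lie in $\ell_p^u(E_k)$) and choosing each $x_1^{(k)}$ to be an arbitrary vector $z_k\in E_k$ in the first coordinate and zero afterwards, we get $T(a_1+z_1,\ldots,a_n+z_n)=T(a_1,\ldots,a_n)$ for all $a_k,z_k$; setting $a_k=0$ yields $T\equiv0$. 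One must also make sure $\left\Vert T\right\Vert_{al,p}^{ev}\ge 0$, which is automatic, and that it is finite on the whole space, which is exactly the content of (a)$\Leftrightarrow$(b) in Theorem~\ref{teorema-Caracterizacao-L(al,p,ev)}.

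A small subtlety worth addressing is well-definedness of the norm itself, i.e. that $\phi_T$ really maps into $Rad(F)$ (not merely into sequences) whenever $T\in\mathcal{L}_{al,p}^{ev}$; this is precisely the implication (a)$\Rightarrow$(b) of the preceding theorem, which we may invoke. I would also remark that by Lemma~\ref{tr} the quantity $\left\Vert \phi_T\right\Vert$ is finite, so the map takes values in $[0,\infty[$ as claimed. I do not anticipate a serious obstacle: the entire content is packaged into Theorem~\ref{teorema-Caracterizacao-L(al,p,ev)}, and the proposition is the routine observation that $T\mapsto\phi_T$ is an injective linear map into a normed space, so pulling back the operator norm gives a norm. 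The only place requiring a line of actual argument is the injectivity (positive definiteness) step, which reduces to evaluating $T$ on eventually-zero sequences as above.

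The approach, then, is: (i) recall from Theorem~\ref{teorema-Caracterizacao-L(al,p,ev)} that for $T\in\mathcal{L}_{al,p}^{ev}$ the map $\phi_T$ is a well-defined continuous $n$-linear operator, so $\left\Vert T\right\Vert_{al,p}^{ev}=\left\Vert \phi_T\right\Vert\in[0,\infty[$; (ii) observe $T\mapsto\phi_T$ is linear, giving homogeneity and subadditivity of $\left\Vert \cdot\right\Vert_{al,p}^{ev}$ from the corresponding properties of the operator norm; (iii) prove $\phi_T=0\Rightarrow T=0$ by testing on finitely supported sequences, whence $\left\Vert \cdot\right\Vert_{al,p}^{ev}$ separates points. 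Together these three facts are exactly the definition of a norm on $\mathcal{L}_{al,p}^{ev}(E_1,\ldots,E_n;F)$.
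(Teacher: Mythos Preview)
Your proposal is correct and is exactly the routine verification one would expect: linearity of $T\mapsto\phi_T$ gives homogeneity and subadditivity, Theorem~\ref{teorema-Caracterizacao-L(al,p,ev)} gives finiteness, and testing on finitely supported sequences (together with Lemma~\ref{ContracaoInfinito} to see that $\Vert\cdot\Vert_{Rad(F)}=0$ forces each coordinate to vanish) gives injectivity. The paper in fact omits this proof entirely, merely citing the analogous result in \cite{Scand}, so there is no alternative approach to compare against; your argument is precisely the standard one.
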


\begin{proposition}
\label{ImagemPhiFechada} The linear map $\phi:\mathcal{L}_{al,p}^{ev}\left(
E_{1},...,E_{n};F\right)  \rightarrow\mathcal{L}\left(  G_{1},...,G_{n}%
;Rad\left(  F\right)  \right)  $ given by $\phi\left(  T\right)  =\phi_{T}$ is
injective and its range is closed in $\mathcal{L}\left(  G_{1},...,G_{n}%
;Rad\left(  F\right)  \right)  $.
\end{proposition}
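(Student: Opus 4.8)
The plan is to prove Proposition~\ref{ImagemPhiFechada} in two parts: first injectivity of $\phi$, then closedness of its range. Injectivity is immediate from the definition of $\phi_T$: if $\phi_T = 0$, then taking all $(x_j^{(k)})_{j=1}^\infty$ to be eventually zero (say $x_1^{(k)} = y_k$ and $x_j^{(k)} = 0$ for $j \geq 2$), the first coordinate of $\phi_T\big((a_1,(x_j^{(1)})_j),\ldots,(a_n,(x_j^{(n)})_j)\big)$ equals $T(a_1+y_1,\ldots,a_n+y_n) - T(a_1,\ldots,a_n)$; choosing $a_k = 0$ for all $k$ recovers $T(y_1,\ldots,y_n)$, which must then be zero for all $(y_1,\ldots,y_n)$, so $T = 0$. (One should also remark that $\phi$ is linear, which is clear since $\phi_T$ depends linearly on $T$.)

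For closedness, I would take a sequence $(T_\ell)_\ell$ in $\mathcal{L}_{al,p}^{ev}(E_1,\ldots,E_n;F)$ such that $\phi_{T_\ell} \to \Psi$ in $\mathcal{L}(G_1,\ldots,G_n;Rad(F))$, and show $\Psi = \phi_T$ for some $T \in \mathcal{L}_{al,p}^{ev}$. The natural candidate for $T$ is obtained by reading off the "finite-rank-in-the-$\ell_p^u$-variable" part of $\Psi$: define $T(y_1,\ldots,y_n)$ as the first coordinate of $\Psi\big((0,(y_1,0,0,\ldots)),\ldots,(0,(y_n,0,0,\ldots))\big) \in Rad(F)$, where $(y_k,0,0,\ldots)$ denotes the sequence with $y_k$ in the first slot and zeros afterwards (this sequence lies in $\ell_p^u(E_k)$). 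Since $\phi_{T_\ell} \to \Psi$ in operator norm and, by Lemma~\ref{ContracaoInfinito}, convergence in $Rad(F)$ forces coordinatewise convergence, we get $T_\ell(y_1,\ldots,y_n) \to T(y_1,\ldots,y_n)$ for each fixed $(y_1,\ldots,y_n)$; combined with the multi-ideal norm inequality $\|\cdot\| \leq \|\cdot\|_{al,p}^{ev}$ (so $(T_\ell)$ is Cauchy in the sup norm too, being Cauchy in $\|\cdot\|_{al,p}^{ev} = \|\phi_\cdot\|$), this identifies $T$ as an element of $\mathcal{L}(E_1,\ldots,E_n;F)$ with $T_\ell \to T$ uniformly on bounded sets. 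Then I would verify $T \in \mathcal{L}_{al,p}^{ev}$ by checking criterion (d) of Theorem~\ref{teorema-Caracterizacao-L(al,p,ev)}: for each fixed $m$, each partial Rademacher sum $\int_0^1 \|\sum_{j=1}^m r_j(t)(T(a_1+x_j^{(1)},\ldots) - T(a_1,\ldots))\|^2\,dt$ is the limit of the corresponding expression for $T_\ell$, which is bounded by $(\sup_\ell \|T_\ell\|_{al,p}^{ev})\prod_k(\|a_k\| + \|(x_j^{(k)})_{j=1}^m\|_{w,p})$; the supremum is finite because $(\|T_\ell\|_{al,p}^{ev})_\ell = (\|\phi_{T_\ell}\|)_\ell$ converges. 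Finally, $\phi_T = \Psi$: both are continuous $n$-linear maps into $Rad(F)$, and they agree coordinatewise on the dense-enough class of inputs with eventually-zero $\ell_p^u$-components (using again coordinatewise convergence via Lemma~\ref{ContracaoInfinito} and continuity of both maps in the $G_k$ norms), hence agree everywhere.

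The main obstacle I anticipate is the last identification step, $\phi_T = \Psi$: one must be careful that knowing $\phi_{T_\ell} \to \Psi$ only gives control of $\Psi$ on a single coordinate at a time (via Lemma~\ref{ContracaoInfinito}), and that reconstructing the full $Rad(F)$-valued behaviour of $\Psi$ from its coordinates requires the convergence in $Rad(F)$ of the relevant partial sums — which is precisely what criterion (d) and the uniform bound on $\|T_\ell\|_{al,p}^{ev}$ supply. Concretely, one shows that for any input the partial Rademacher sums $\sum_{j=1}^m r_j(\cdot)\big(T(a_1+x_j^{(1)},\ldots) - T(a_1,\ldots)\big)$ form a Cauchy net in $L_2([0,1];F)$ with limit $\phi_T$ of the input, and that $\Psi$ of the input has the same coordinates, so the two $Rad(F)$ elements coincide. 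Everything else (linearity, the norm estimate, Cauchyness transfer) is routine and, as the authors indicate, parallels the absolutely summing case in \cite{Scand}.
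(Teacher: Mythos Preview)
Your proposal is correct and follows the approach the paper indicates (the authors omit the proof entirely, pointing to the analogous argument in \cite{Scand}); the use of Lemma~\ref{ContracaoInfinito} to pass from $Rad(F)$-convergence to coordinatewise convergence, together with criterion~(d) of Theorem~\ref{teorema-Caracterizacao-L(al,p,ev)} to certify membership of the limit $T$ in $\mathcal{L}_{al,p}^{ev}$, is exactly the expected route. One small simplification: in the final identification $\phi_T=\Psi$ you do not need the density argument, since the coordinatewise comparison already works for \emph{every} input $\bigl((a_k,(x_j^{(k)})_j)\bigr)_k$ --- the $j$-th coordinate of $\phi_{T_\ell}$ at that input converges both to the $j$-th coordinate of $\Psi$ (by Lemma~\ref{ContracaoInfinito}) and to the $j$-th coordinate of $\phi_T$ (by pointwise convergence $T_\ell\to T$), and two elements of $Rad(F)$ with identical coordinates coincide.
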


So, we easily get the following result:

\begin{proposition}
The space $\mathcal{L}_{al,p}^{ev}(E_{1},...,E_{n};F)$ is complete under the
norm $\left\Vert \cdot\right\Vert _{al,p}^{ev}$.
\end{proposition}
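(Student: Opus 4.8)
The plan is to deduce completeness of $\mathcal{L}_{al,p}^{ev}(E_{1},\dots,E_{n};F)$ with the norm $\|\cdot\|_{al,p}^{ev}$ directly from the two preceding propositions. Recall that by Proposition~\ref{norma-Ev-Al-P} the quantity $\|T\|_{al,p}^{ev}=\|\phi_T\|$ really is a norm, and by Proposition~\ref{ImagemPhiFechada} the linear map $\phi\colon T\mapsto\phi_T$ is an isometric (by the very definition of the norm) injection of $\mathcal{L}_{al,p}^{ev}(E_{1},\dots,E_{n};F)$ into the Banach space $\mathcal{L}(G_{1},\dots,G_{n};Rad(F))$, and its range is closed there. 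So the entire argument is: an isometry onto a closed subspace of a Banach space has complete domain.

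The steps, in order, are as follows. First I would recall that $\mathcal{L}(G_{1},\dots,G_{n};Rad(F))$, the space of continuous $n$-linear maps from the Banach spaces $G_s=E_s\times\ell_p^u(E_s)$ into the Banach space $Rad(F)$, is itself a Banach space under its usual sup norm; this is standard. Next, note that $\phi$ is a linear isometry: $\|\phi(T)\|=\|\phi_T\|=\|T\|_{al,p}^{ev}$ is literally the definition of the norm in Proposition~\ref{norma-Ev-Al-P}. Then take a Cauchy sequence $(T_k)$ in $\mathcal{L}_{al,p}^{ev}(E_{1},\dots,E_{n};F)$; since $\phi$ is an isometry, $(\phi_{T_k})$ is Cauchy in $\mathcal{L}(G_{1},\dots,G_{n};Rad(F))$, hence converges to some $\Psi$ in that space. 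By Proposition~\ref{ImagemPhiFechada} the range of $\phi$ is closed, so $\Psi=\phi_T$ for some $T\in\mathcal{L}_{al,p}^{ev}(E_{1},\dots,E_{n};F)$. Finally, since $\phi$ is an isometry, $\|T_k-T\|_{al,p}^{ev}=\|\phi_{T_k}-\phi_T\|\to0$, so $T_k\to T$ in $\mathcal{L}_{al,p}^{ev}(E_{1},\dots,E_{n};F)$, proving completeness.

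There is essentially no obstacle here: all the real work has been done in Propositions~\ref{norma-Ev-Al-P} and \ref{ImagemPhiFechada} (and ultimately in Theorem~\ref{teorema-Caracterizacao-L(al,p,ev)} and Lemma~\ref{tr}). The only point deserving a word of care is to make explicit that $\phi$ is norm-preserving, not merely bounded below, so that the transfer of the Cauchy/convergence properties between the two spaces is immediate in both directions; this is exactly what the definition $\|T\|_{al,p}^{ev}=\|\phi_T\|$ gives us. Thus the proof is a two-line argument, and I would write it as such — "This is immediate from Propositions~\ref{norma-Ev-Al-P} and \ref{ImagemPhiFechada}, since $\phi$ is a linear isometry of $\mathcal{L}_{al,p}^{ev}(E_{1},\dots,E_{n};F)$ onto a closed subspace of the Banach space $\mathcal{L}(G_{1},\dots,G_{n};Rad(F))$, and a space linearly isometric to a closed subspace of a Banach space is itself complete."
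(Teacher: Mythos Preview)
Your proposal is correct and is exactly the argument the paper has in mind: the proposition is stated immediately after Propositions~\ref{norma-Ev-Al-P} and~\ref{ImagemPhiFechada} with the words ``So, we easily get the following result,'' and no further proof is given. Your explicit observation that $\phi$ is by definition an isometry onto a closed subspace of the Banach space $\mathcal{L}(G_1,\dots,G_n;Rad(F))$ is precisely the intended two-line justification.
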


A first step to proving that $\left(  \mathcal{L}_{al,p}^{ev},\left\Vert
.\right\Vert _{al,p}^{ev}\right)  $ is a Banach multi-ideal, is to evaluate
the norm of $A_{n}:\mathbb{K}^{n}\rightarrow\mathbb{K}$ given by $A_{n}\left(
x_{1},\ldots,x_{n}\right)  =x_{1}\ldots x_{n}.$ Using that, for every $\left(
a_{n}\right)  _{n=1}^{\infty}\in\ell_{2},$
\[
\int_{0}^{1}\left\vert \sum_{j=1}^{\infty}r_{j}\left(  t\right)
a_{j}\right\vert ^{2}dt=\sum_{j=1}^{\infty}\left\vert a_{j}\right\vert ^{2},
\]
we can show that $\left\Vert A_{n}\right\Vert _{al,p}^{ev}=1$.

\begin{theorem}
$\left(  \mathcal{L}_{al,p}^{ev},\left\Vert .\right\Vert _{al,p}^{ev}\right)
$ is a Banach multi-ideal.
\end{theorem}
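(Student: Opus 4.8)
The plan is to verify the three axioms of a normed multi-ideal for $\bigl(\mathcal{L}_{al,p}^{ev},\Vert\cdot\Vert_{al,p}^{ev}\bigr)$, since completeness and the fact that each component is a subspace of $\mathcal{L}(E_1,\dots,E_n;F)$ containing the finite-type mappings have already been established (or are immediate from the characterization in Theorem \ref{teorema-Caracterizacao-L(al,p,ev)}, which gives a bound forcing $\mathcal{L}_{al,p}^{ev}(E_1,\dots,E_n;F)\subseteq\mathcal{L}(E_1,\dots,E_n;F)$, and from Proposition \ref{norma-Ev-Al-P} together with the completeness statement). Axiom (1) is exactly Proposition \ref{norma-Ev-Al-P}, and axiom (2), $\Vert A_n\Vert_{al,p}^{ev}=1$, was just checked using Kahane/Parseval. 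So the real content is axiom (3): the ideal inequality
\[
\Vert v\circ T\circ(u_1,\dots,u_n)\Vert_{al,p}^{ev}\le\Vert v\Vert\,\Vert T\Vert_{al,p}^{ev}\,\Vert u_1\Vert\cdots\Vert u_n\Vert,
\]
for $T\in\mathcal{L}_{al,p}^{ev}(E_1,\dots,E_n;F)$, $u_j\in\mathcal{L}(G_j;E_j)$, $v\in\mathcal{L}(F;H)$.

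First I would observe that $v\circ T\circ(u_1,\dots,u_n)$ genuinely lies in $\mathcal{L}_{al,p}^{ev}(G_1,\dots,G_n;H)$: this follows from criterion (d) (or (c)) of Theorem \ref{teorema-Caracterizacao-L(al,p,ev)}, since bounded linear maps send $\ell_p^u$-sequences to $\ell_p^u$-sequences with $\Vert(u_k x_j^{(k)})_j\Vert_{w,p}\le\Vert u_k\Vert\,\Vert(x_j^{(k)})_j\Vert_{w,p}$, and $v$ can be pulled out of the $Rad$-norm with a factor $\Vert v\Vert$ since $\Bigl(\int_0^1\Vert\sum_j r_j(t)\,v(z_j)\Vert^2\,dt\Bigr)^{1/2}\le\Vert v\Vert\Bigl(\int_0^1\Vert\sum_j r_j(t)z_j\Vert^2\,dt\Bigr)^{1/2}$. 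The cleanest route to the norm inequality itself is to work through the map $\phi$: one checks the identity
\[
\phi_{v\circ T\circ(u_1,\dots,u_n)}=\widetilde v\circ\phi_T\circ(\widetilde u_1,\dots,\widetilde u_n),
\]
where $\widetilde u_s:G_s^{\mathrm{new}}=G_s\times\ell_p^u(G_s)\to E_s\times\ell_p^u(E_s)$ is $(a,(x_j)_j)\mapsto(u_s a,(u_s x_j)_j)$ and $\widetilde v:Rad(F)\to Rad(H)$ is $(y_j)_j\mapsto(v y_j)_j$. Here the crucial algebraic point is that $T$ being multilinear lets one expand $T(u_1 a_1+u_1 x_j^{(1)},\dots)-T(u_1 a_1,\dots)$ so that it matches $\phi_T$ evaluated at $\bigl((u_1 a_1,(u_1 x_j^{(1)})_j),\dots\bigr)$, and then $v$ applied termwise; composing with $v$ and precomposing with the $\widetilde u_s$ reproduces the left-hand side. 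Once this factorization identity is in hand, the bound is just the operator-norm submultiplicativity $\Vert\widetilde v\circ\phi_T\circ(\widetilde u_1,\dots,\widetilde u_n)\Vert\le\Vert\widetilde v\Vert\,\Vert\phi_T\Vert\,\prod_s\Vert\widetilde u_s\Vert$, together with $\Vert\widetilde v\Vert\le\Vert v\Vert$ and $\Vert\widetilde u_s\Vert\le\Vert u_s\Vert$ (both from the displayed Rademacher/$\ell_p^w$ estimates and the $\max$-norm on $G_s$), and $\Vert\phi_T\Vert=\Vert T\Vert_{al,p}^{ev}$ by definition.

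The step I expect to be the main (though still routine) obstacle is verifying the factorization identity $\phi_{v\circ T\circ(u_1,\dots,u_n)}=\widetilde v\circ\phi_T\circ(\widetilde u_1,\dots,\widetilde u_n)$ carefully — in particular making sure the additive shifts by the $a_i$ interact correctly with the linear maps $u_i$, which is where multilinearity of $T$ (as opposed to mere continuity) is used, and also checking that $\widetilde u_s$ indeed maps into $E_s\times\ell_p^u(E_s)$ (not just $\ell_p^w$), using that $u_s$ is continuous and $\ell_p^u$ is the closure of the finitely supported sequences. I would write out the $n=2$ case explicitly, as the paper does elsewhere, and note that the general case is identical. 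Everything else — that $\phi_T$ is well-defined and continuous with $\Vert\phi_T\Vert=\Vert T\Vert_{al,p}^{ev}$, and the norm estimates for $\widetilde u_s$ and $\widetilde v$ — has already been set up, so the proof reduces to assembling these pieces.
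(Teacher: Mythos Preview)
Your proposal is correct and follows essentially the same approach as the paper: both arguments hinge on the identity
\[
v\circ T\circ(u_1,\dots,u_n)\bigl(a_1+x_j^{(1)},\dots\bigr)-v\circ T\circ(u_1,\dots,u_n)(a_1,\dots)=v\Bigl(T\bigl(u_1 a_1+u_1 x_j^{(1)},\dots\bigr)-T(u_1 a_1,\dots)\Bigr)
\]
together with the obvious norm bounds for $v$ acting on $Rad$ and each $u_k$ acting on $\ell_p^u$. The only difference is cosmetic --- the paper computes directly with finite sums via characterization (d) of Theorem~\ref{teorema-Caracterizacao-L(al,p,ev)}, whereas you package the same computation as the factorization $\phi_{v\circ T\circ(u_1,\dots,u_n)}=\widetilde v\circ\phi_T\circ(\widetilde u_1,\dots,\widetilde u_n)$ and invoke operator-norm submultiplicativity.
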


\begin{proof}
Let $u_{j}\in\mathcal{L}\left(  G_{j},E_{j}\right)  $, $j=1,\ldots,n$,
$T\in\mathcal{L}_{al,p}^{ev}\left(  E_{1},\ldots,E_{n};F\right)  $ and
$w\in\mathcal{L}(F;H)$.

Note that%
\begin{align*}
&  {\sum\limits_{j=1}^{m}}r_{j}(t)\left[  w\circ T\circ\left(  u_{1}%
,\ldots,u_{n}\right)  \left(  a_{1}+x_{j}^{(1)},\ldots,a_{n}+x_{j}%
^{(n)}\right)  -w\circ T\circ\left(  u_{1},\ldots,u_{n}\right)  \left(
a_{1},\ldots,a_{n}\right)  \right] \\
&  ={\sum\limits_{j=1}^{m}}r_{j}(t)\left[  w\left(  T\left(  u_{1}%
(a_{1})+u_{1}\left(  x_{j}^{(1)}\right)  ,\ldots,u_{n}(a_{n})+u_{n}\left(
x_{j}^{(n)}\right)  \right)  \right)  -w\left(  T\left(  u_{1}(a_{1}%
),\ldots,u_{n}(a_{n})\right)  \right)  \right] \\
&  =w\left(  {\sum\limits_{j=1}^{m}}r_{j}(t)\left[  T\left(  u_{1}%
(a_{1})+u_{1}\left(  x_{j}^{(1)}\right)  ,\ldots,u_{n}(a_{n})+u_{n}\left(
x_{j}^{(n)}\right)  \right)  -T\left(  u_{1}(a_{1}),\ldots,u_{n}%
(a_{n})\right)  \right]  \right)
\end{align*}
So we have%
\begin{align}
&  \left\Vert \left(  w\circ T\circ\left(  u_{1},\ldots,u_{n}\right)  \left(
a_{1}+x_{j}^{(1)},\ldots,a_{n}+x_{j}^{(n)}\right)  -w\circ T\circ\left(
u_{1},\ldots,u_{n}\right)  \left(  a_{1},\ldots,a_{n}\right)  \right)
_{j=1}^{m}\right\Vert _{Rad(H)}\nonumber\\
=  &  \left(  {\int_{0}^{1}}\left\Vert w\left(  {\sum\limits_{j=1}^{m}}%
r_{j}(t)\left[
\begin{array}
[c]{c}%
T\left(  u_{1}(a_{1})+u_{1}\left(  x_{j}^{(1)}\right)  ,\ldots,u_{n}%
(a_{n})+u_{n}\left(  x_{j}^{(n)}\right)  \right) \\
-T\left(  u_{1}(a_{1}),\ldots,u_{n}(a_{n})\right)
\end{array}
\right]  \right)  \right\Vert ^{2}dt\right)  ^{\frac{1}{2}}\nonumber\\
\leq &  \left(  \left\Vert w\right\Vert ^{2}{\int_{0}^{1}}\left\Vert
{\sum\limits_{j=1}^{m}}r_{j}(t)\left[
\begin{array}
[c]{c}%
T\left(  u_{1}(a_{1})+u_{1}\left(  x_{j}^{(1)}\right)  ,\ldots,u_{n}%
(a_{n})+u_{n}\left(  x_{j}^{(n)}\right)  \right) \\
-T\left(  u_{1}(a_{1}),\ldots,u_{n}(a_{n})\right)
\end{array}
\right]  \right\Vert ^{2}dt\right)  ^{\frac{1}{2}}\nonumber\\
\leq &  \left\Vert w\right\Vert \left\Vert T\right\Vert _{al,p}^{ev}\left(
\left\Vert u_{1}(a_{1})\right\Vert +\left\Vert \left(  u_{1}\left(
x_{j}^{(1)}\right)  \right)  _{j=1}^{m}\right\Vert _{w,p}\right)
\cdots\left(  \left\Vert u_{n}(a_{n})\right\Vert +\left\Vert \left(
u_{n}\left(  x_{j}^{(n)}\right)  \right)  _{j=1}^{m}\right\Vert _{w,p}\right)
\nonumber\\
\leq &  \left\Vert w\right\Vert \left\Vert T\right\Vert _{al,p}^{ev}\left\Vert
u_{1}\right\Vert \cdots\left\Vert u_{n}\right\Vert \left(  \left\Vert
a_{1}\right\Vert +\left\Vert \left(  x_{j}^{(1)}\right)  _{j=1}^{m}\right\Vert
_{w,p}\right)  \cdots\left(  \left\Vert a_{n}\right\Vert +\left\Vert \left(
x_{j}^{(n)}\right)  _{j=1}^{m}\right\Vert _{w,p}\right) \nonumber
\end{align}

and it follows that
\[
\left\Vert w\circ T\circ\left(  u_{1},\ldots,u_{n}\right)  \right\Vert
_{al,p}^{ev}\leq\left\Vert w\right\Vert \left\Vert T\right\Vert _{al,p}%
^{ev}\left\Vert u_{1}\right\Vert \cdots\left\Vert u_{n}\right\Vert .
\]
The other properties are easily verified.
\end{proof}

\section{The space $\left(  \mathcal{P}_{al,p}^{ev}\left(  ^{n}E;F\right)
,\left\Vert \cdot\right\Vert _{al,p}^{ev}\right)  $}

\bigskip In this section we use the following notations:

\begin{itemize}
\item If $P\in\mathcal{P}\left(  ^{n}E;F\right)  ,$ the unique symmetric
$n$-linear mapping associated to $P$ is denoted by $\overset{\vee}{P}.$

\item If $P\in\mathcal{P}\left(  ^{n}E;F\right)  ,$ $a\in E$ and $1\leq k\leq
n$, then $P_{a^{k}}\in\mathcal{P}\left(  ^{n-k}E;F\right)  $ is given by%
\[
P_{a^{k}}(x)=\overset{\vee}{P}(a,...,a,x,...,x).
\]

\item If $T\in\mathcal{L}\left(  ^{n}E;F\right)  $ is symmetric$,$ $a\in E$
and $1\leq k\leq n$, then $T_{a^{k}}\in\mathcal{L}\left(  ^{n-k}E;F\right)  $
is given by%
\[
T_{a^{k}}(x_{1},...,x_{n-k})=T(a,...,a,x_{1},...,x_{n-k}).
\]
By using the Polarization Formula (see \cite[Corollary 1.6]{din}) we can
easily prove the following result:
\end{itemize}

\begin{proposition}
\bigskip$P\in\mathcal{P}_{al,p}^{ev}\left(  ^{n}E;F\right)  $ if and only if
$\overset{\vee}{P}\in\mathcal{L}_{al,p}^{ev}\left(  ^{n}E;F\right)  .$
\end{proposition}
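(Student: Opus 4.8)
The plan is to prove both implications by relating the almost-$p$-summing condition for the polynomial $P$ at an arbitrary point $a$ to the almost-$p$-summing condition for the symmetric $n$-linear map $\overset{\vee}{P}$ at the associated diagonal point, using the Polarization Formula to pass back and forth. Recall that $P(x)=\overset{\vee}{P}(x,\ldots,x)$ and that polarization expresses $\overset{\vee}{P}(x_1,\ldots,x_n)$ as a finite linear combination $\overset{\vee}{P}(x_1,\ldots,x_n)=\frac{1}{2^n n!}\sum_{\varepsilon_i=\pm1}\varepsilon_1\cdots\varepsilon_n\, P(\varepsilon_1 x_1+\cdots+\varepsilon_n x_n)$.

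For the implication $(\Leftarrow)$, suppose $\overset{\vee}{P}\in\mathcal{L}_{al,p}^{ev}(^{n}E;F)$. Fix $a\in E$ and a sequence $(x_j)_{j=1}^\infty\in\ell_p^u(E)$; we must show $(P(a+x_j)-P(a))_{j=1}^\infty\in Rad(F)$. Writing $P(a+x_j)-P(a)=\overset{\vee}{P}(a+x_j,\ldots,a+x_j)-\overset{\vee}{P}(a,\ldots,a)$ and expanding multilinearly, we obtain a finite sum of terms of the form $\binom{n}{k}\overset{\vee}{P}(a,\ldots,a,x_j,\ldots,x_j)$ with $k$ copies of $x_j$ and $n-k$ copies of $a$, for $1\le k\le n$. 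Each such term, as a sequence in $j$, can be bracketed between two genuine ``increment'' sequences associated to $\overset{\vee}{P}$ at suitable points: more precisely, I would apply the hypothesis that $\overset{\vee}{P}$ is almost $p$-summing at points like $(a,\ldots,a,0,\ldots,0)$, feeding in the sequences $(0,\ldots,0,x_j,\ldots,x_j)$ in the appropriate slots, and use Theorem~\ref{teorema-Caracterizacao-L(al,p,ev)} together with an inductive peeling-off argument exactly as in the proof $(d)\Rightarrow(a)$ there (the case $a_1=0$, $a_2\ne 0$ etc.) to isolate each homogeneous piece $(\overset{\vee}{P}(a,\ldots,a,x_j,\ldots,x_j))_{j=1}^\infty$ as an element of $Rad(F)$. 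Since $Rad(F)$ is a vector space, the finite sum lands in $Rad(F)$.

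For the implication $(\Rightarrow)$, suppose $P\in\mathcal{P}_{al,p}^{ev}(^{n}E;F)$. Fix $(a_1,\ldots,a_n)\in E^n$ and sequences $(x_j^{(i)})_{j=1}^\infty\in\ell_p^u(E)$, $i=1,\ldots,n$; we must show the increment sequence for $\overset{\vee}{P}$ at $(a_1,\ldots,a_n)$ lies in $Rad(F)$. Here the Polarization Formula does the work: $\overset{\vee}{P}(a_1+x_j^{(1)},\ldots,a_n+x_j^{(n)})$ is a fixed finite $\mathbb{K}$-linear combination of terms $P\big(\sum_{i=1}^n\varepsilon_i(a_i+x_j^{(i)})\big)=P\big(b_\varepsilon+\sum_{i=1}^n\varepsilon_i x_j^{(i)}\big)$, where $b_\varepsilon=\sum_i\varepsilon_i a_i$ and $\sum_i\varepsilon_i x_j^{(i)}$ defines an element of $\ell_p^u(E)$ (a finite sum of elements of $\ell_p^u(E)$, and its norm is controlled by $\sum_i\|(x_j^{(i)})_j\|_{w,p}$). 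By hypothesis $P$ is almost $p$-summing at each point $b_\varepsilon$, so $\big(P(b_\varepsilon+\sum_i\varepsilon_i x_j^{(i)})-P(b_\varepsilon)\big)_{j=1}^\infty\in Rad(F)$; subtracting the constant $P(b_\varepsilon)$ does not affect membership in $Rad(F)$ for the increment we actually need, once one also subtracts the polarization of the constant term $\overset{\vee}{P}(a_1,\ldots,a_n)$. Taking the same finite linear combination and using that $Rad(F)$ is a subspace yields $(\overset{\vee}{P}(a_1+x_j^{(1)},\ldots,a_n+x_j^{(n)})-\overset{\vee}{P}(a_1,\ldots,a_n))_{j=1}^\infty\in Rad(F)$, as desired.

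The main obstacle is the $(\Leftarrow)$ direction: going from the multilinear increments to the single-variable polynomial increment requires separating the various homogeneous components $\overset{\vee}{P}(a,\ldots,a,x_j,\ldots,x_j)$ and verifying each individually lies in $Rad(F)$, which is not immediate from $\overset{\vee}{P}$ being almost $p$-summing only at the single point $(a,\ldots,a)$ — one genuinely needs the ``everywhere'' hypothesis to evaluate $\overset{\vee}{P}$ at the mixed points $(a,\ldots,a,0,\ldots,0)$ and then run the same Cauchy-sequence/peeling argument as in $(d)\Rightarrow(a)$ of Theorem~\ref{teorema-Caracterizacao-L(al,p,ev)}. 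In the $(\Rightarrow)$ direction, the only point requiring minor care is bookkeeping the constant terms so that the linear combination of the $Rad(F)$-valued increment sequences telescopes to exactly the increment of $\overset{\vee}{P}$; this is routine given the Polarization Formula. I would therefore write the proof by first disposing of $(\Rightarrow)$ via polarization in one short paragraph, then devoting the bulk to $(\Leftarrow)$, citing the argument structure of Theorem~\ref{teorema-Caracterizacao-L(al,p,ev)}$(d)\Rightarrow(a)$ to keep the peeling-off induction short.
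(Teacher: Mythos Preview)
Your proposal is correct and uses the same ingredient the paper points to (the Polarization Formula; the paper gives no further details). However, you have the relative difficulty of the two directions backwards, and this leads you to do unnecessary work in $(\Leftarrow)$.

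For $(\Leftarrow)$, no multilinear expansion or peeling-off is needed at all: if $\overset{\vee}{P}\in\mathcal{L}_{al,p}^{ev}(^{n}E;F)$ then in particular $\overset{\vee}{P}$ is almost $p$-summing at the diagonal point $(a,\ldots,a)$, and feeding in the \emph{same} sequence $x_j^{(1)}=\cdots=x_j^{(n)}=x_j$ gives directly
\[
\bigl(P(a+x_j)-P(a)\bigr)_{j=1}^{\infty}=\bigl(\overset{\vee}{P}(a+x_j,\ldots,a+x_j)-\overset{\vee}{P}(a,\ldots,a)\bigr)_{j=1}^{\infty}\in Rad(F).
\]
That is one line. (Your alternative route---isolating each homogeneous piece $\overset{\vee}{P}(a,\ldots,a,x_j,\ldots,x_j)$---also works, and in fact is itself immediate: apply the everywhere hypothesis at the point $(a,\ldots,a,0,\ldots,0)$ with sequences $(0,\ldots,0,x_j,\ldots,x_j)$; the subtracted constant vanishes by multilinearity, so no inductive peeling in the style of Theorem~\ref{teorema-Caracterizacao-L(al,p,ev)}$(d)\Rightarrow(a)$ is required.)

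The direction that genuinely needs the Polarization Formula is $(\Rightarrow)$, and your argument there is exactly right: write $\overset{\vee}{P}(a_1+x_j^{(1)},\ldots,a_n+x_j^{(n)})-\overset{\vee}{P}(a_1,\ldots,a_n)$ as the fixed finite linear combination $\frac{1}{2^n n!}\sum_{\varepsilon}\varepsilon_1\cdots\varepsilon_n\bigl(P(b_\varepsilon+z_j^{\varepsilon})-P(b_\varepsilon)\bigr)$ with $b_\varepsilon=\sum_i\varepsilon_i a_i$ and $z_j^{\varepsilon}=\sum_i\varepsilon_i x_j^{(i)}\in\ell_p^u(E)$, and invoke the everywhere hypothesis on $P$ at each $b_\varepsilon$. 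So I would swap the emphasis in your write-up: one sentence for $(\Leftarrow)$, and the polarization paragraph for $(\Rightarrow)$.
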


\bigskip So, we consider the map
\[
\left\Vert \cdot\right\Vert _{al,p}^{ev}:\mathcal{P}_{al,p}^{ev}\left(
^{n}E;F\right)  \rightarrow\left[  0,+\infty\right)
\]
given by
\[
\left\Vert P\right\Vert _{al,p}^{ev}:=\left\Vert \overset{\vee}{P}\right\Vert
_{al,p}^{ev}.
\]
Since $\left(  \mathcal{L}_{al,p}^{ev},\left\Vert .\right\Vert _{al,p}%
^{ev}\right)  $ is a Banach multi-ideal, from \cite[Proposition 2.5.2]{Bra}
(see also \cite[page 46]{BBJP}) it follows that:

\begin{proposition}
\label{ghj}$\left(  \mathcal{P}_{al,p}^{ev},\left\Vert \cdot\right\Vert
_{al,p}^{ev}\right)  $ is a (Banach) polynomial ideal.
\end{proposition}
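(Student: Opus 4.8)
The plan is to reduce everything to the fact that, for each fixed $n$, the symmetrization map $P\mapsto\overset{\vee}{P}$ is a linear bijection of $\mathcal{P}(^{n}E;F)$ onto the subspace $\mathcal{L}_{s}(^{n}E;F)$ of symmetric members of $\mathcal{L}(^{n}E;F)$, and that by the Polarization Formula this map and its inverse are bounded (with constants depending only on $n$). First I would record that, by the Proposition just stated, $P\in\mathcal{P}_{al,p}^{ev}(^{n}E;F)$ if and only if $\overset{\vee}{P}$ belongs to the symmetric part of $\mathcal{L}_{al,p}^{ev}(^{n}E;F)$, so that by the very definition $\left\Vert P\right\Vert _{al,p}^{ev}:=\left\Vert \overset{\vee}{P}\right\Vert _{al,p}^{ev}$ the map $\left\Vert \cdot\right\Vert _{al,p}^{ev}$ is the norm transported from the multilinear setting; in particular condition (1) of the definition of a normed polynomial ideal is immediate, and $\left\Vert P_{n}\right\Vert _{al,p}^{ev}=\left\Vert A_{n}\right\Vert _{al,p}^{ev}=1$ by the computation preceding the statement, which gives (2).

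Next I would verify the ideal property (a) and the estimate (3). For (a): a finite-type $n$-homogeneous polynomial has a finite-type symmetric $n$-linear associate, hence lies in $\mathcal{P}_{al,p}^{ev}(^{n}E;F)$; and for $u\in\mathcal{L}(G;E)$, $P\in\mathcal{P}_{al,p}^{ev}(^{n}E;F)$, $w\in\mathcal{L}(F;H)$ one has $\overset{\vee}{w\circ P\circ u}=w\circ\overset{\vee}{P}\circ(u,\ldots,u)$, which belongs to $\mathcal{L}_{al,p}^{ev}(^{n}G;H)$ because $\mathcal{L}_{al,p}^{ev}$ is a multi-ideal, whence $w\circ P\circ u\in\mathcal{P}_{al,p}^{ev}(^{n}G;H)$. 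The inequality (3) is then inherited from the multi-ideal inequality for $\mathcal{L}_{al,p}^{ev}$:
\[
\left\Vert w\circ P\circ u\right\Vert _{al,p}^{ev}=\left\Vert w\circ\overset{\vee}{P}\circ(u,\ldots,u)\right\Vert _{al,p}^{ev}\leq\left\Vert w\right\Vert \left\Vert \overset{\vee}{P}\right\Vert _{al,p}^{ev}\left\Vert u\right\Vert ^{n}=\left\Vert w\right\Vert \left\Vert P\right\Vert _{al,p}^{ev}\left\Vert u\right\Vert ^{n}.
\]

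Finally, for the completeness statement I would argue that the subspace of symmetric members of $\mathcal{L}_{al,p}^{ev}(^{n}E;F)$ is closed. Applying (\ref{ptr}) with $m=1$ and $a_{1}=\cdots=a_{n}=0$ gives $\left\Vert T\right\Vert \leq\left\Vert T\right\Vert _{al,p}^{ev}$ for every $T\in\mathcal{L}_{al,p}^{ev}(^{n}E;F)$, so a $\left\Vert \cdot\right\Vert _{al,p}^{ev}$-convergent sequence of symmetric operators converges to the same limit in $\mathcal{L}(^{n}E;F)$, and that limit is symmetric. Since $\mathcal{L}_{al,p}^{ev}(^{n}E;F)$ is complete, this closed subspace is complete, and the isometry $P\mapsto\overset{\vee}{P}$ from $\left(\mathcal{P}_{al,p}^{ev}(^{n}E;F),\left\Vert \cdot\right\Vert _{al,p}^{ev}\right)$ onto it transports completeness back. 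The only genuinely delicate point is exactly this transfer of completeness: one must use that the polynomial norm is literally the pullback of the multilinear one and that symmetry is preserved under $\mathcal{L}_{al,p}^{ev}$-limits; the remaining verifications are the routine bookkeeping packaged in \cite[Proposition 2.5.2]{Bra}.
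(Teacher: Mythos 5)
Your argument is correct and follows the same route as the paper: the paper simply cites the general transfer result \cite[Proposition 2.5.2]{Bra} (see also \cite[page 46]{BBJP}), which says exactly that a Banach multi-ideal together with the norm $\left\Vert P\right\Vert :=\left\Vert \overset{\vee}{P}\right\Vert _{\mathcal{M}}$ yields a Banach polynomial ideal, and your proposal just unpacks that citation (ideal properties via $\left(  w\circ P\circ u\right)  ^{\vee}=w\circ\overset{\vee}{P}\circ(u,\ldots,u)$, and completeness via $\left\Vert T\right\Vert \leq\left\Vert T\right\Vert _{al,p}^{ev}$ plus closedness of the symmetric part). No gaps worth flagging; the verification you give is precisely the content of the result the authors invoke.
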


We will show that $\left(  \mathcal{P}_{al,p}^{ev},\left\Vert \cdot\right\Vert
_{al,p}^{ev}\right)  $ is a (global) holomorphy type in the sense of L. Nachbin.

An $\left(  n+1\right)  $-linear map $A\in\mathcal{L}\left(  ^{n}E,G;F\right)
$ is symmetric in the $n$\textit{ first variables if}
\[
A\left(  x_{1},\ldots,x_{n},y\right)  =A\left(  x_{\sigma(1)},\ldots
,x_{\sigma(n)},y\right)
\]
for all permutation $\sigma$ in the set $\left\{  1,\ldots,n\right\}  ,$ for
all $x_{1},\ldots,x_{n}\in E$ and $y\in G.$

Below, given $A\in\mathcal{L}\left(  E_{1},\ldots,E_{n};F\right)  $ and $a\in
E_{n}$ we define $Aa\in\mathcal{L}\left(  E_{1},\ldots,E_{n-1};F\right)  $ by
\[
Aa\left(  x_{1},\ldots,x_{n-1}\right)  =A\left(  x_{1},\ldots,x_{n-1}%
,a\right)  .
\]
The following property (Property (B)) was explored in \cite{BBJP}. The name
\textquotedblleft Property B\textquotedblright\ is dedicated to Hans-Andreas
Braunss, who essentially worked with a similar concept in his PhD thesis
\cite{Br}.

\begin{definition}
[Property (B)]Let $\mathcal{I}$ be a class of continuous multilinear maps
between Banach spaces such that for all positive integer $n$ and Banach spaces
$E_{1},\ldots,E_{n}$, $F$ the \ component
\[
\mathcal{I}\left(  E_{1},\ldots,E_{n};F\right)  :=\mathcal{L}\left(
E_{1},\ldots,E_{n};F\right)  \cap\mathcal{I}%
\]
is a linear subspace of $\mathcal{L}\left(  E_{1},\ldots,E_{n};F\right)  $
endowed with a norm represented by $\Vert\cdot\Vert_{\mathcal{I}}$. The class
$\mathcal{I}$ has the property $\left(  B\right)  $ if there is a $C\geq1$ so
that for all $n\in\mathbb{N}$, all Banach spaces $E$ and $F$ and all
$A\in\mathcal{I}\left(  ^{n}E,\mathbb{K};F\right)  $ which is symmetric in the
$n$ first variables, it occurs
\[
A1\in\mathcal{I}\left(  ^{n}E;F\right)  \text{ and }\Vert A1\Vert
_{\mathcal{I}}\leq C\Vert A\Vert_{\mathcal{I}}%
\]

\end{definition}

\begin{theorem}
\label{gyt}$\left(  \mathcal{P}_{al,p}^{ev},\left\Vert .\right\Vert
_{al,p}^{ev}\right)  $ is a global holomorphy type.
\end{theorem}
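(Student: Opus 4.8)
The plan is to deduce that $\left(\mathcal{P}_{al,p}^{ev},\Vert\cdot\Vert_{al,p}^{ev}\right)$ is a global holomorphy type from the fact that $\left(\mathcal{L}_{al,p}^{ev},\Vert\cdot\Vert_{al,p}^{ev}\right)$ is a Banach multi-ideal (already established) together with Property (B) for the class $\mathcal{L}_{al,p}^{ev}$. Indeed, the abstract machinery of \cite{BBJP} (or the analogous statement in \cite{Bra}) says that whenever a Banach multi-ideal has Property (B), the associated polynomial ideal, with the norm inherited from the symmetric multilinear part, is a global holomorphy type; one gets uniform bounds of the form $\Vert d^k P(a)\Vert_{al,p}^{ev}\leq C^k\binom{n}{k}\Vert P\Vert_{al,p}^{ev}\Vert\cdot\Vert$-type estimates by repeatedly applying the Property (B) bound and the multi-ideal inequality to the symmetric $n$-linear map $\overset{\vee}{P}$. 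So the real content of the proof is to verify Property (B) for $\mathcal{L}_{al,p}^{ev}$, i.e. to show that there is a constant $C\geq1$ so that for every symmetric-in-the-first-$n$-variables $A\in\mathcal{L}_{al,p}^{ev}({}^nE,\mathbb{K};F)$ we have $A1\in\mathcal{L}_{al,p}^{ev}({}^nE;F)$ with $\Vert A1\Vert_{al,p}^{ev}\leq C\Vert A\Vert_{al,p}^{ev}$.

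To prove Property (B) I would use the characterization from Theorem~\ref{teorema-Caracterizacao-L(al,p,ev)}(d): it suffices to bound, for all $m$, all $x_j^{(k)}\in E$ and all $(a_1,\ldots,a_n)\in E^n$, the quantity
\[
\left(\int_0^1\left\Vert\sum_{j=1}^m r_j(t)\Bigl(A1(a_1+x_j^{(1)},\ldots,a_n+x_j^{(n)})-A1(a_1,\ldots,a_n)\Bigr)\right\Vert^2dt\right)^{1/2}
\]
by a constant multiple of $\prod_{k=1}^n\bigl(\Vert a_k\Vert+\Vert(x_j^{(k)})_{j=1}^m\Vert_{w,p}\bigr)$. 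Since $A1(y_1,\ldots,y_n)=A(y_1,\ldots,y_n,1)$, this expression is precisely the left-hand side of the inequality \eqref{ptr} for the $(n+1)$-linear map $A$ evaluated with the extra slot filled by the constant sequence $x_j^{(n+1)}=1$ and $a_{n+1}=0$. Plugging into the inequality \eqref{ptr} for $A$ (with constant $C=\Vert A\Vert_{al,p}^{ev}$) gives an upper bound $\Vert A\Vert_{al,p}^{ev}\cdot\prod_{k=1}^n\bigl(\Vert a_k\Vert+\Vert(x_j^{(k)})_{j=1}^m\Vert_{w,p}\bigr)\cdot\bigl(0+\Vert(1)_{j=1}^m\Vert_{w,p}^{(\mathbb{K})}\bigr)$, so the whole issue reduces to controlling $\Vert(1,1,\ldots,1)\Vert_{w,p}$ with $m$ ones in the scalar field $\mathbb{K}$. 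Here lies the main obstacle: in $\mathbb{K}$ one has $\Vert(1)_{j=1}^m\Vert_{w,p}=m^{1/p}\to\infty$, so the naive substitution does \emph{not} give a uniform bound, and this is exactly the subtlety the introduction warned about ("the case of almost summing mappings is more delicate"). One must instead exploit symmetry in the first $n$ variables and reorganize: fix the last variable to run over a \emph{unit} vector rather than the all-ones sequence. Concretely, take $x_j^{(n+1)}=e_j/\,$? no --- rather, use that by $2$-summing-type absorption one can replace the constant $1$ in the last slot using the contraction principle together with the fact that, for the last (scalar) slot, almost-summing at $0$ collapses to an $\ell_2$-type estimate (recall $\int_0^1|\sum r_j(t)a_j|^2dt=\sum|a_j|^2$ from the computation just before Theorem~\ref{gyt}), so a single nonzero scalar costs only a bounded factor.

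Making this precise, I would proceed as follows. Write $\overset{\vee}{A}$ for the symmetrization of $A$ in all $n+1$ variables (or work directly with $A$ since it is already symmetric in the first $n$). Using the polarization-type identity for the $(n+1)$-st variable one expresses $A1(\,\cdot\,)$ as a \emph{fixed} linear combination, with universal coefficients, of terms $A(\,\cdot\,,\lambda)$ with $|\lambda|\le 1$; applying \eqref{ptr} to each such term with the last sequence being the constant-$\lambda$ sequence and noting $\Vert(\lambda)_{j=1}^m\Vert_{w,p}=|\lambda|m^{1/p}$ still looks bad, so instead the correct device is: the $(n+1)$-st variable of $A$ is scalar, and restricting $A$ to that variable at $0$ and using the randomized-sum estimate shows $\bigl(A(x_j^{(1)},\ldots,x_j^{(n)},1)\bigr)_j$ behaves like the $\ell_2$-localization, giving a bound by $\Vert A\Vert_{al,p}^{ev}\prod_k\Vert(x_j^{(k)})\Vert_{w,p}$ \emph{without} the $m^{1/p}$ factor, because one diagonalizes rather than using the full weak norm of the constant sequence. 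I would then assemble the general $(a_1,\ldots,a_n)$ case by splitting, exactly as in the proof of $(d)\Rightarrow(a)$ of Theorem~\ref{teorema-Caracterizacao-L(al,p,ev)}, into the $2^n$ terms according to which $a_i$'s are present, bounding each via \eqref{ptr} for $A$. This yields $\Vert A1\Vert_{al,p}^{ev}\le C\Vert A\Vert_{al,p}^{ev}$ for a universal $C$ (independent of $n,E,F$), establishing Property (B); then the cited abstract result gives that $\left(\mathcal{P}_{al,p}^{ev},\Vert\cdot\Vert_{al,p}^{ev}\right)$ is a global holomorphy type. The main obstacle, to reiterate, is handling the scalar last slot so that the constant sequence there contributes only a bounded factor rather than $m^{1/p}$ — this is where symmetry in the first $n$ variables (which is part of the hypothesis of Property (B)) and the Hilbertian behavior of $Rad(\mathbb{K})$ must be used together.
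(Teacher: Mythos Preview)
Your overall strategy is exactly the paper's: verify Property~(B) for $\mathcal{L}_{al,p}^{ev}$ and then invoke \cite[Theorem 3.2]{BBJP}. The paper dismisses the verification as ``standard computations'', and indeed it is---but your execution of that step contains a genuine error that leads you into an imaginary obstacle.

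The mistake is in your choice of substitution in the last slot. You set $a_{n+1}=0$ and $x_j^{(n+1)}=1$, which forces the factor $\Vert(1)_{j=1}^m\Vert_{w,p}=m^{1/p}$ and seems to blow up. The correct---and entirely elementary---choice is the opposite one: take $a_{n+1}=1\in\mathbb{K}$ and $x_j^{(n+1)}=0$ for all $j$. Then
\[
A\bigl(a_1+x_j^{(1)},\ldots,a_n+x_j^{(n)},a_{n+1}+x_j^{(n+1)}\bigr)-A(a_1,\ldots,a_n,a_{n+1})
= A1\bigl(a_1+x_j^{(1)},\ldots,a_n+x_j^{(n)}\bigr)-A1(a_1,\ldots,a_n),
\]
and inequality~(\ref{ptr}) for $A$ (with $C=\Vert A\Vert_{al,p}^{ev}$) gives the bound
\[
\Vert A\Vert_{al,p}^{ev}\prod_{k=1}^{n}\bigl(\Vert a_k\Vert+\Vert(x_j^{(k)})_{j=1}^{m}\Vert_{w,p}\bigr)\cdot\bigl(|1|+\Vert(0)_{j=1}^{m}\Vert_{w,p}\bigr)
=\Vert A\Vert_{al,p}^{ev}\prod_{k=1}^{n}\bigl(\Vert a_k\Vert+\Vert(x_j^{(k)})_{j=1}^{m}\Vert_{w,p}\bigr).
\]
Hence $\Vert A1\Vert_{al,p}^{ev}\le\Vert A\Vert_{al,p}^{ev}$, i.e.\ Property~(B) holds with $C=1$. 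This is precisely the ``standard computation'' the paper alludes to, and it exploits the very feature of the \emph{everywhere} notion: one may place the fixed scalar $1$ in the base point $a_{n+1}$ and let the increment vanish. Your subsequent attempts (polarization in the last variable, appeals to symmetry in the first $n$ variables, the Hilbertian identity for $Rad(\mathbb{K})$, a $2^n$-term splitting) are unnecessary and, as written, do not constitute a proof; in particular the symmetry hypothesis plays no role in bounding $\Vert A1\Vert$ here.
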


\begin{proof}
Standard computations show that $\left(  \mathcal{L}_{al,p}^{ev},\left\Vert
.\right\Vert _{al,p}^{ev}\right)  $ has property (B) and the result follows
from \cite[Theorem 3.2]{BBJP}.
\end{proof}

Now we show that, contrary to the original definition from \cite{Botelho-Nach}%
, our approach is coherent and compatible with the notion of almost
$p$-summing linear operators.

\begin{definition}
[Carando, Dimant and Muro \cite{CDM09}]\label{IdeaisCompativeis}Let
$\mathcal{U}$ be a normed ideal of linear operators. A normed ideal of
$n$-homogeneous polynomials $\mathcal{U}_{n}$ is compatible with $\mathcal{U}$
if there exist positive constants $A$ and $B$ such that for every Banach
spaces $E$ and $F$, the following conditions hold:

$\left(  i\right)  $ For each $P\in\mathcal{U}_{n}\left(  E;F\right)  $ and
$a\in E$, $P_{a^{n-1}}$ belongs to $\mathcal{U}\left(  E;F\right)  $ and
\[
\left\Vert P_{a^{n-1}}\right\Vert _{\mathcal{U}\left(  E;F\right)  }\leq
A\left\Vert P\right\Vert _{\mathcal{U}_{n}\left(  E;F\right)  }\left\Vert
a\right\Vert ^{n-1}.
\]

$\left(  ii\right)  $ For each $T\in\mathcal{U}\left(  E;F\right)  $ and
$\gamma\in E^{\ast}$, $\gamma^{n-1}T$ belongs to $\mathcal{U}_{n}\left(
E;F\right)  $ and
\[
\left\Vert \gamma^{n-1}T\right\Vert _{\mathcal{U}_{n}\left(  E;F\right)  }\leq
B\left\Vert \gamma\right\Vert ^{n-1}\left\Vert T\right\Vert _{\mathcal{U}%
\left(  E;F\right)  }%
\]

\end{definition}

\begin{definition}
[Carando, Dimant and Muro \cite{CDM09}]\label{IdeaisCoerentes}Consider the
sequence $\left\{  \mathcal{U}_{k}\right\}  _{k=1}^{N}$, where for each $k$,
$\mathcal{U}_{k}$ is a ideal of $k$-homogeneous polynomials and $N$ is
eventually infinite. The sequence $\left\{  \mathcal{U}_{k}\right\}  _{k}$ is
a coherent sequence of polynomial ideals if there exist positive constants $C$
and $D$ such that for every Banach spaces $E$ and $F$, the following
conditions hold for $k=1,\ldots,N-1$:

$\left(  i\right)  $ For each $P\in\mathcal{U}_{k+1}\left(  E;F\right)  $ and
$a\in E$, $P_{a}$ belongs to $\mathcal{U}_{k}\left(  E;F\right)  $ and
\[
\left\Vert P_{a}\right\Vert _{\mathcal{U}_{k}\left(  E;F\right)  }\leq
C\left\Vert P\right\Vert _{\mathcal{U}_{k+1}\left(  E;F\right)  }\left\Vert
a\right\Vert .
\]

$\left(  ii\right)  $ For each $P\in\mathcal{U}_{k}\left(  E;F\right)  $ and
$\gamma\in E^{\ast}$, $\gamma P$ belongs to $\mathcal{U}_{k+1}\left(
E;F\right)  $ and
\[
\left\Vert \gamma P\right\Vert _{\mathcal{U}_{k+1}}\left(  E;F\right)  \leq
D\left\Vert \gamma\right\Vert \left\Vert P\right\Vert _{\mathcal{U}_{k}\left(
E;F\right)  }.
\]

\end{definition}

\bigskip

In our next two results we will represent $\left(  \mathcal{P}_{al,p}%
^{ev}\left(  ^{n}E;F\right)  ,\left\Vert \cdot\right\Vert _{al,p}^{ev}\right)
$ by $\left(  \mathcal{P}_{al,p}^{ev(n)}\left(  ^{n}E;F\right)  ,\left\Vert
\cdot\right\Vert _{al,p}^{ev(n)}\right)  $ in order to be more precise.

\begin{theorem}
\label{PolinomioCoerente}The sequence $\left\{  \mathcal{P}_{al,p}%
^{ev(k)},\left\Vert \cdot\right\Vert _{al,p}^{ev(k)}\right\}  _{k=1}^{N}$ is coherent.
\end{theorem}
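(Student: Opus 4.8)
The plan is to check the two conditions of Definition~\ref{IdeaisCoerentes} with $C=1$ and with $D=5$, working throughout with the multilinear reformulation: recall that $Q\in\mathcal{P}_{al,p}^{ev(m)}(^{m}E;F)$ exactly when $\overset{\vee}{Q}\in\mathcal{L}_{al,p}^{ev}(^{m}E;F)$, that $\|Q\|_{al,p}^{ev(m)}=\|\overset{\vee}{Q}\|_{al,p}^{ev}=\|\phi_{\overset{\vee}{Q}}\|$ (Proposition~\ref{norma-Ev-Al-P}), and that membership in $\mathcal{L}_{al,p}^{ev}$ is governed by Theorem~\ref{teorema-Caracterizacao-L(al,p,ev)}. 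I shall also use the Contraction Principle (Theorem~\ref{ContractionPrinciple}), the estimate $\|x_j\|\le\|(x_l)_{l}\|_{w,p}$, and the fact that for $\gamma\in E^{*}$ and $(x_j)_j\in\ell_p^{u}(E)$ one has $\|(\gamma(x_j))_j\|_{\ell_2}\le\|(\gamma(x_j))_j\|_{\ell_p}\le\|\gamma\|\,\|(x_j)_j\|_{w,p}$, the first inequality because $p\le 2$.

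Condition~(i) should be immediate. Given $P\in\mathcal{P}_{al,p}^{ev(k+1)}(^{k+1}E;F)$ and $a\in E$, put $T:=\overset{\vee}{P}$; since $T$ is symmetric, $\overset{\vee}{P_a}(x_1,\dots,x_k)=T(x_1,\dots,x_k,a)$, and unwinding the definitions gives
\[
\phi_{\overset{\vee}{P_a}}(g_1,\dots,g_k)=\phi_T\big(g_1,\dots,g_k,(a,0)\big)\qquad (g_i\in G_i).
\]
Hence $\phi_{\overset{\vee}{P_a}}$ is a continuous $k$-linear map into $Rad(F)$, so $P_a\in\mathcal{P}_{al,p}^{ev(k)}(^{k}E;F)$ by Theorem~\ref{teorema-Caracterizacao-L(al,p,ev)}, and because $\|(a,0)\|_{G_{k+1}}=\|a\|$ we get $\|P_a\|_{al,p}^{ev(k)}=\|\phi_{\overset{\vee}{P_a}}\|\le\|\phi_T\|\,\|a\|=\|P\|_{al,p}^{ev(k+1)}\,\|a\|$, so $C=1$ works.

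For condition~(ii), given $P\in\mathcal{P}_{al,p}^{ev(k)}(^{k}E;F)$ and $\gamma\in E^{*}$, I would set $T:=\overset{\vee}{P}$ and observe that $\overset{\vee}{\gamma P}=\tfrac{1}{k+1}\sum_{i=1}^{k+1}U_i$, where $U_i(x_1,\dots,x_{k+1})=\gamma(x_i)\,T(x_1,\dots,\widehat{x_i},\dots,x_{k+1})$ (the right-hand side is symmetric $(k+1)$-linear with diagonal $\gamma P$). Since $T\mapsto\phi_T$ is linear, it suffices to bound each $\|\phi_{U_i}\|$, and after relabelling variables I may take $i=k+1$, i.e.\ $U(x_1,\dots,x_{k+1})=\gamma(x_{k+1})T(x_1,\dots,x_k)$. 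With $g_l=(a_l,(x_j^{(l)})_j)$, $w_j:=T(a_1+x_j^{(1)},\dots,a_k+x_j^{(k)})$ and $v:=T(a_1,\dots,a_k)$, the relevant identity is
\[
\gamma(a_{k+1}+x_j^{(k+1)})\,w_j-\gamma(a_{k+1})\,v=\gamma\big(a_{k+1}+x_j^{(k+1)}\big)(w_j-v)+\gamma\big(x_j^{(k+1)}\big)\,v,
\]
so $\phi_U(g_1,\dots,g_{k+1})$ is the sum of $\big(\gamma(a_{k+1}+x_j^{(k+1)})(w_j-v)\big)_j$ and $\big(\gamma(x_j^{(k+1)})\,v\big)_j$. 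In the first sequence $(w_j-v)_j=\phi_T(g_1,\dots,g_k)\in Rad(F)$ and $|\gamma(a_{k+1}+x_j^{(k+1)})|\le\|\gamma\|(\|a_{k+1}\|+\|x_j^{(k+1)}\|)\le 2\|\gamma\|\,\|g_{k+1}\|_{G_{k+1}}$, so the Contraction Principle (applied to real and imaginary parts) places it in $Rad(F)$ with norm at most $4\|\gamma\|\,\|g_{k+1}\|_{G_{k+1}}\,\|\phi_T\|\prod_{i=1}^{k}\|g_i\|_{G_i}$. For the second sequence, $(\gamma(x_j^{(k+1)}))_j\in\ell_p(\mathbb{K})\subseteq\ell_2(\mathbb{K})$ and $\|v\|\le\|\phi_T\|\prod_{i=1}^{k}\|a_i\|$ (test $\phi_T$ on sequences supported at a single index), whence its $Rad(F)$-norm is $\|v\|\,\|(\gamma(x_j^{(k+1)}))_j\|_{\ell_2}\le\|\phi_T\|\,\|\gamma\|\prod_{i=1}^{k+1}\|g_i\|_{G_i}$. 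Adding, $\|\phi_U(g_1,\dots,g_{k+1})\|_{Rad(F)}\le 5\,\|\gamma\|\,\|\phi_T\|\prod_{i=1}^{k+1}\|g_i\|_{G_i}$; thus $U\in\mathcal{L}_{al,p}^{ev}(^{k+1}E;F)$ with $\|\phi_U\|\le 5\|\gamma\|\,\|\phi_T\|$, and consequently $\gamma P\in\mathcal{P}_{al,p}^{ev(k+1)}(^{k+1}E;F)$ with $\|\gamma P\|_{al,p}^{ev(k+1)}\le 5\,\|\gamma\|\,\|P\|_{al,p}^{ev(k)}$. Since $C$ and $D$ are absolute constants, coherence will follow.

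The one step I expect to require care is the term $\gamma(x_j^{(k+1)})w_j$ in (ii): the scalars $\gamma(x_j^{(k+1)})$ are not bounded by $1$, and $(w_j)_j$ by itself is in general not a member of $Rad(F)$ --- only the centered sequence $(w_j-v)_j=\phi_T(g_1,\dots,g_k)$ is. The displayed algebraic identity is exactly what is needed to peel off a piece with uniformly bounded coefficients (to which the Contraction Principle applies) from a rank-one remainder $\gamma(x_j^{(k+1)})v$, which is handled by the inclusion $\ell_p\hookrightarrow\ell_2$ available since $1<p\le 2$. The remaining estimates are routine bookkeeping with the norm $\|\cdot\|_{G_s}=\max\{\|\cdot\|,\|\cdot\|_{w,p}\}$ on $G_s=E_s\times\ell_p^{u}(E_s)$.
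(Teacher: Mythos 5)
Your proposal is correct, and its skeleton is the same as the paper's: reduce everything to the symmetric multilinear form $\overset{\vee}{P}$ and the norm $\|\phi_T\|$, prove condition (i) by plugging the point $(a,0)\in G_{k+1}$ into $\phi_{\overset{\vee}{P}}$ (this is exactly the paper's "simple calculation", with $C=1$), and prove condition (ii) by estimating the symmetrized product of $\gamma$ with $\overset{\vee}{P}$. The difference is in (ii): the paper merely asserts, "from the definition of the norms", the inequality $\|\gamma\overset{\vee}{P}\|_{al,p}^{ev(k+1)}\leq\|\gamma\|\,\|\overset{\vee}{P}\|_{al,p}^{ev(k)}$ with constant $1$ and gives no argument (note also that the paper's formula $(\gamma P)^{\vee}=\gamma\overset{\vee}{P}$ should be read as the symmetrized product $\frac{1}{k+1}\sum_i U_i$ you write down, since $\gamma\otimes\overset{\vee}{P}$ itself is not symmetric). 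You actually carry out the omitted estimate: the identity $\gamma(a_{k+1}+x_j^{(k+1)})w_j-\gamma(a_{k+1})v=\gamma(a_{k+1}+x_j^{(k+1)})(w_j-v)+\gamma(x_j^{(k+1)})v$ correctly splits $\phi_U$ into a piece with uniformly bounded coefficients (handled by the Contraction Principle, with the extra factor $2$ for complex scalars) plus a rank-one piece handled by $\ell_p\hookrightarrow\ell_2$ and Rademacher orthogonality, and the bound $\|v\|\leq\|\phi_T\|\prod_i\|a_i\|$ obtained by testing on singly supported sequences is valid. This yields $D=5$ instead of the paper's claimed $D=1$; since Definition \ref{IdeaisCoerentes} only requires some constants independent of $k$, your $C=1$, $D=5$ establish coherence just as well, and your write-up is in fact more complete than the paper's at precisely the step the paper leaves to the reader.
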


\begin{proof}
Let $a,b\in E$ and $P\in\mathcal{P}_{al,p}^{ev(k+1)}\left(  ^{k+1}E;F\right)
$. From the Polarization Formula one can prove that
\[
\left(  P_{a}\right)  ^{\vee}=\overset{\vee}{P}_{a}%
\]
and from the definition of the norms $\left\Vert .\right\Vert _{al,p}%
^{ev(k)},$ a simple calculation shows that%
\[
\left\Vert \overset{\vee}{P}_{a}\right\Vert _{al,p}^{ev(k)}\leq\left\Vert
\overset{\vee}{P}\right\Vert _{al,p}^{ev(k+1)}\left\Vert a\right\Vert .
\]
So we obtain
\[
\left\Vert P_{a}\right\Vert _{al,p}^{ev(k)}=\left\Vert \left(  P_{a}\right)
^{\vee}\right\Vert _{al,p}^{ev(k)}=\left\Vert \overset{\vee}{P}_{a}\right\Vert
_{al,p}^{ev(k)}\leq\left\Vert \overset{\vee}{P}\right\Vert _{al,p}%
^{ev(k+1)}\left\Vert a\right\Vert =\left\Vert P\right\Vert _{al,p}%
^{ev(k+1)}\left\Vert a\right\Vert
\]
and $\left(  i\right)  $ of Definition \ref{IdeaisCoerentes} is satisfied.

Now, consider $\gamma\in E^{\ast}$ and $P\in\mathcal{P}_{al,p}^{ev(k)}\left(
^{k}E;F\right)  $. By invoking the Polarization Formula we have
\[
\left(  \gamma P\right)  ^{\vee}=\gamma\overset{\vee}{P}%
\]
and from the definition of the norms $\left\Vert .\right\Vert _{al,p}^{ev(k)}$
it is not very difficult to prove that%
\[
\left\Vert \gamma\overset{\vee}{P}\right\Vert _{al,p}^{ev(k+1)}\leq\left\Vert
\gamma\right\Vert \left\Vert \overset{\vee}{P}\right\Vert _{al,p}^{ev(k)}.
\]
Hence
\[
\left\Vert \gamma P\right\Vert _{al,p}^{ev(k+1)}=\left\Vert \left(  \gamma
P\right)  ^{\vee}\right\Vert _{al,p}^{ev(k+1)}=\left\Vert \gamma\overset{\vee
}{P}\right\Vert _{al,p}^{ev(k+1)}\leq\left\Vert \gamma\right\Vert \left\Vert
\overset{\vee}{P}\right\Vert _{al,p}^{ev(k)}=\left\Vert \gamma\right\Vert
\left\Vert P\right\Vert _{al,p}^{ev(k)}%
\]
and the result follows.
\end{proof}

\bigskip

\begin{theorem}
For every positive integer $k$, the polynomial ideal $\left(  \mathcal{P}%
_{al,p}^{ev(k)},\left\Vert \cdot\right\Vert _{al,p}^{ev(k)}\right)  $ is
compatible with $\Pi_{al,p}$.
\end{theorem}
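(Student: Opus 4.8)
The plan is to verify the two conditions of Definition~\ref{IdeaisCompativeis} by reducing everything, via the associated symmetric multilinear maps, to the already-established multi-ideal inequality of Theorem~\ref{teorema-Caracterizacao-L(al,p,ev)}(d), exactly as in the proof of Theorem~\ref{PolinomioCoerente}. Recall that $\left\Vert P\right\Vert_{al,p}^{ev(k)}=\left\Vert \overset{\vee}{P}\right\Vert_{al,p}^{ev}$ and that for $P\in\mathcal{P}_{al,p}^{ev(k)}(^{k}E;F)$ and $a\in E$ one has $(P_{a^{k-1}})^{\vee}=\overset{\vee}{P}_{a^{k-1}}$ by the Polarization Formula; iterating the coherence estimate (or arguing directly from (\ref{ptr})) gives $\left\Vert \overset{\vee}{P}_{a^{k-1}}\right\Vert_{al,p}^{ev}\le\left\Vert \overset{\vee}{P}\right\Vert_{al,p}^{ev}\left\Vert a\right\Vert^{k-1}$. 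Since $\overset{\vee}{P}_{a^{k-1}}$ is a linear map and, by Remark~\ref{yb}, its $\left\Vert\cdot\right\Vert_{al,p}^{ev}$-norm coincides with its $\Pi_{al,p}$-norm, this is precisely condition~$(i)$ with $A=1$. One should note here that $P_{a^{k-1}}\in\mathcal{P}(^{1}E;F)=\mathcal{L}(E;F)$ and $(P_{a^{k-1}})^{\vee}=P_{a^{k-1}}$, so no constant is lost in passing between the polynomial and the linear notation.

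For condition~$(ii)$, take $T\in\Pi_{al,p}(E;F)$ and $\gamma\in E^{\ast}$. The $k$-homogeneous polynomial $\gamma^{k-1}T$ has symmetric part $(\gamma^{k-1}T)^{\vee}(x_{1},\ldots,x_{k})=\frac{1}{k}\sum_{i=1}^{k}\gamma(x_{1})\cdots\widehat{\gamma(x_{i})}\cdots\gamma(x_{k})\,T(x_{i})$, the average over the $k$ slots of the $k$-linear map $(x_{1},\ldots,x_{k})\mapsto\gamma(x_{1})\cdots\gamma(x_{k-1})T(x_{k})$. So it suffices to estimate the $\left\Vert\cdot\right\Vert_{al,p}^{ev}$-norm of the latter $k$-linear map $S(x_{1},\ldots,x_{k})=\gamma(x_{1})\cdots\gamma(x_{k-1})T(x_{k})$ and use the triangle inequality (the symmetrization averages $k$ permutations, each with the same norm, so no constant is lost). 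To bound $\left\Vert S\right\Vert_{al,p}^{ev}$ I would plug $S$ into the inequality (\ref{ptr}): writing $b_{j}^{(i)}=\gamma(a_{i}+x_{j}^{(i)})$ for $i<k$ and using multilinearity to telescope the difference $S(a_{1}+x_{j}^{(1)},\ldots,a_{k}+x_{j}^{(k)})-S(a_{1},\ldots,a_{k})$, the randomized sum $\sum_{j=1}^{m}r_{j}(t)[\,\cdots\,]$ can be handled by pulling the scalar factors out and invoking the Contraction Principle (Theorem~\ref{ContractionPrinciple}), together with the defining inequality for $T\in\Pi_{al,p}$ in the form of Remark~\ref{yb} applied to the last coordinate. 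Carrying this bookkeeping through yields $\left\Vert S\right\Vert_{al,p}^{ev}\le\left\Vert\gamma\right\Vert^{k-1}\left\Vert T\right\Vert_{\Pi_{al,p}}$, hence $\left\Vert\gamma^{k-1}T\right\Vert_{al,p}^{ev(k)}\le\left\Vert\gamma\right\Vert^{k-1}\left\Vert T\right\Vert_{\Pi_{al,p}}$, i.e. condition~$(ii)$ with $B=1$.

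The one genuinely delicate point, and the step I expect to be the main obstacle, is the telescoping/estimation inside (\ref{ptr}) for the map $S$ in condition~$(ii)$: one must expand $S(a_{1}+x_{j}^{(1)},\ldots,a_{k}+x_{j}^{(k)})-S(a_{1},\ldots,a_{k})$ into a sum of $2^{k}-1$ terms, and for each term either the scalar coefficients $\gamma(a_{i})$ or $\gamma(x_{j}^{(i)})$ must be controlled by $\left\Vert\gamma\right\Vert(\left\Vert a_{i}\right\Vert+\left\Vert(x_{j}^{(i)})_{j=1}^{m}\right\Vert_{w,p})$ uniformly, and the $T$-part fed into the almost-summing inequality for $T$ in the remaining coordinate. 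The terms in which the last coordinate carries only $a_{k}$ (no $x_{j}^{(k)}$) need the observation, already used in the proof of Theorem~\ref{teorema-Caracterizacao-L(al,p,ev)}, that $(T(x_{j}^{(k)}))_{j}\in Rad(F)$ together with $Rad(F)$-convergence to deduce that the corresponding constant sequences also lie in $Rad(F)$; but since $T$ is linear these are genuinely simpler than in the multilinear case. Once the estimate is organized along the lines of the $(d)\Rightarrow(a)$ argument in Theorem~\ref{teorema-Caracterizacao-L(al,p,ev)}, the constants $A=B=1$ drop out and the proof is complete. I would therefore write: \emph{The verification of $(i)$ is a repetition of the first half of the proof of Theorem~\ref{PolinomioCoerente} together with Remark~\ref{yb}; for $(ii)$ one symmetrizes $\gamma^{k-1}T$ and applies the defining inequality (\ref{ptr}) and the Contraction Principle as in the proof of Theorem~\ref{teorema-Caracterizacao-L(al,p,ev)}, obtaining the estimate with $A=B=1$.}
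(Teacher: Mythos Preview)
Your argument for condition~$(i)$ matches the paper's: both iterate the coherence estimate from Theorem~\ref{PolinomioCoerente} and invoke Remark~\ref{yb} to identify the $1$-linear norm with the $\Pi_{al,p}$-norm.

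For condition~$(ii)$, however, you take a different and unnecessarily laborious route. You write out the symmetrization $(\gamma^{k-1}T)^{\vee}$ explicitly, reduce to the non-symmetric map $S(x_{1},\ldots,x_{k})=\gamma(x_{1})\cdots\gamma(x_{k-1})T(x_{k})$, and then propose to verify the characterization inequality~(\ref{ptr}) for $S$ by expanding the difference into $2^{k}-1$ terms and controlling each with the Contraction Principle. This can be made to work (your identification of the delicate step is accurate, and the bookkeeping does close with the right constant, at least over $\mathbb{R}$), but it is exactly the kind of computation the paper avoids. The paper simply observes that $\gamma^{k-1}T=\gamma\cdot(\gamma^{k-2}T)$ and applies part~$(ii)$ of Theorem~\ref{PolinomioCoerente} inductively: from $\gamma u\in\mathcal{P}_{al,p}^{ev(2)}$ with $\Vert\gamma u\Vert_{al,p}^{ev(2)}\le\Vert\gamma\Vert\,\Vert u\Vert_{al,p}$ one gets $\gamma^{2}u\in\mathcal{P}_{al,p}^{ev(3)}$ with $\Vert\gamma^{2}u\Vert_{al,p}^{ev(3)}\le\Vert\gamma\Vert^{2}\Vert u\Vert_{al,p}$, and so on. This is completely parallel to what you already do for~$(i)$, and it sidesteps the telescoping, the explicit symmetrization, and the appeal to permutation-invariance of the multi-ideal norm. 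Your direct approach buys nothing extra here; the inductive reduction to coherence is both shorter and cleaner.
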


\begin{proof}
Let $u\in\Pi_{al,p}\left(  E;F\right)  $ and $\gamma\in E^{\ast}$. From the
proof of the Proposition \ref{PolinomioCoerente} it follows that
\[
\gamma u\in\mathcal{P}_{al,p}^{ev(2)}\left(  ^{2}E;F\right)
\]
and
\[
\left\Vert \gamma u\right\Vert _{al,p}^{ev(2)}\leq\left\Vert \gamma\right\Vert
\left\Vert u\right\Vert _{al,p}%
\]
Analogously,
\[
\gamma^{2}u\in\mathcal{P}_{al,p}^{ev(3)}\left(  ^{3}E;F\right)
\]
and
\[
\left\Vert \gamma^{2}u\right\Vert _{al,p}^{ev(3)}\leq\left\Vert \gamma
\right\Vert \left\Vert \gamma u\right\Vert _{al,p}^{ev(2)}\leq\left\Vert
\gamma\right\Vert ^{2}\left\Vert u\right\Vert _{al}.
\]

Proceeding by induction, we have $\left(  ii\right)  $ of Definition
\ref{IdeaisCompativeis}.

Now, let $a\in E$ and $P\in\mathcal{P}_{al,p}^{ev(n)}\left(  ^{n}E;F\right)
$. Then, from the proof of the Proposition \ref{PolinomioCoerente} we have
\[
P_{a}\in\mathcal{P}_{al,p}^{ev(n-1)}\left(  ^{n-1}E;F\right)
\]

and
\[
\left\Vert P_{a}\right\Vert _{al,p}^{ev(n-1)}\leq\left\Vert P\right\Vert
_{al,p}^{ev(n)}\left\Vert a\right\Vert .
\]

The proof follows by induction.
\end{proof}

\end{document}